\newtheorem{theorem}{Theorem}
\newtheorem{corollary}[theorem]{Corollary}
\newtheorem{lemma}[theorem]{Lemma}
\newcounter{claims}
\newenvironment{claims}{\refstepcounter{claims}\par\medskip\noindent%
{{\bf (\theclaims)}~~}}{\par\medskip}
\newcommand{\claim}[2]{\begin{claims}{\em #2}\label{#1}\end{claims}}
\newcommand{\refclaim}[1]{(\ref{#1})}
\newcommand{\GG}{{\mathcal G}}
\newcommand{\Ss}{{\mathcal S}}
\begin{document}

\title{$3$-coloring triangle-free planar graphs with a precolored $8$-cycle}
\author{
Zden\v{e}k Dvo\v{r}\'{a}k \thanks{Charles University in Prague, E-mail: {\tt rakdver@iuuk.mff.cuni.cz}. Supported by
the Center of Excellence -- Inst. for Theor. Comp. Sci., Prague (project P202/12/G061 of Czech Science Foundation), and
by project LH12095 (New combinatorial algorithms - decompositions, parameterization, efficient solutions) of Czech Ministry of Education.}
\and
Bernard Lidick\'{y} \thanks{University of Illinois at Urbana-Champaign, E-mail: {\tt lidicky@illinois.edu}}
}
\date{\today}

\maketitle
\begin{abstract}
Let $G$ be a planar triangle-free graph and let $C$ be a cycle in $G$ of length at most $8$.
We characterize all situations where a $3$-coloring of $C$ does not extend to a
proper $3$-coloring of the whole graph.
\end{abstract}

Graphs in this paper may have loops or parallel edges.
A (proper) \emph{$k$-coloring} of a graph $G=(V,E)$ is a mapping $\varphi:V \rightarrow \{1,\ldots,k\}$
such that $\varphi(u)\neq\varphi(v)$ whenever $uv$ is an edge of $G$.
A graph $G$ is \emph{$k$-colorable} if there exists a $k$-coloring of $G$.

Deciding whether a graph is $k$-colorable is NP-complete~\cite{garey1979computers} for every $k\ge 3$.
The situation is somewhat different for planar graphs, which are $4$-colorable by the well-known
Four Color Theorem~\cite{AppHak1,AppHakKoc,rsst}.  However, $3$-colorability of planar graphs is NP-complete~\cite{bib-dai},
which motivates study of additional assumptions guaranteeing $3$-colorability.  For instance, Gr\"{o}tzsch theorem~\cite{grotzsch1959}
states that every triangle-free planar graph is $3$-colorable, inspiring many related results.

Gimbel and Thomassen~\cite{gimbel} proved that a triangle-free projective-planar graph is $3$-colorable unless it
contains a non-bipartite quadrangulation.  We say that a graph $G$ is \emph{$k$-critical} if it is not $(k-1)$-colorable but
every proper subgraph of $G$ is $(k-1)$-colorable; thus, the previous result could be restated as the claim that every $4$-critical
triangle-free projective-planar graph is a non-bipartite quadrangulation.  Critical graphs and study of their properties give
important tools for both theory and algorithms for graph coloring.  For example, many coloring algorithms (especially for embedded
graphs) are based on detection of particular critical subgraphs.

Let us give a quick overview of results regarding embedded critical graphs.  While there are infinitely many $5$-critical graphs
embeddable in any fixed surface except for the sphere, Thomassen~\cite{Thomassen97} proved that for every $g\ge 0$ and $k\ge 6$, there are only finitely
many $k$-critical graphs of Euler genus $g$.  This result was later improved by Postle and Thomas~\cite{pothom} by showing that such $6$-critical
graphs have size $O(g)$.  For graphs of girth at least five, there are only finitely many $k$-critical graphs of Euler genus $g$ for every $g\ge 0$ and $k\ge 4$,
as proved by Thomassen~\cite{thomassen-surf}; the bound on the size of such graphs was later improved to $O(g)$ by Dvo\v{r}\'ak et al.~\cite{trfree3}.
For triangle-free graphs, there are only finitely many $k$-critical graphs of Euler genus $g$ for every $g\ge 0$ and $k\ge 5$.
There are infinitely many triangle-free $4$-critical graphs of genus $g$ for every $g\ge 1$, however their structure is restricted as shown
by Dvo\v{r}\'ak et al.~\cite{trfree4}; in particular, one can design a linear-time algorithm to decide $3$-colorability of triangle-free graphs
of bounded genus~\cite{coltrfree}.

A problem that commonly arises in study of critical graphs is as follows. Suppose that $F$ is a $k$-critical graph and
$C$ is a (usually small) subgraph of $F$ whose removal disconnects $F$.  What can one say about the arising components?
This motivates the following definition.
Let $G$ be a graph and $C$ its (not necessarily induced) proper subgraph.
We say that $G$ is \emph{$C$-critical for $k$-coloring} if for every proper subgraph $H\subset G$
such that $C\subseteq H$, there exists a $k$-coloring of $C$ that extends
to a $k$-coloring of $H$, but not to a $k$-coloring of $G$.

Notice that $(k+1)$-critical graphs are exactly $C$-critical graphs for $k$-coloring with $C=\emptyset$.
Furthermore, it is easy to see that if $F$ is a $(k+1)$-critical graph and $F=G\cup G'$, where $C=G\cap G'$,
then either $G=C$ or $G$ is $C$-critical for $k$-coloring.  A variation on this claim that is often useful
when dealing with embedded graphs is as follows.

\begin{lemma}[Dvo\v{r}\'ak et al.~\cite{trfree1}]\label{lemma-critin}
Let $G$ be a plane graph with outer face $K$.  Let $C$ be a cycle in $G$ that does
not bound a face, and let $H$ be the subgraph of $G$ drawn in the closed disk bounded by $C$.
If $G$ is $K$-critical for $k$-coloring, then $H$ is $C$-critical for $k$-coloring.
\end{lemma}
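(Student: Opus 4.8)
The plan is a cut-and-paste argument on colorings, using the plane embedding. Let $G'$ be the subgraph of $G$ drawn in the closure of the complement of the open disk bounded by $C$, so that $G=H\cup G'$. I would first record the topological facts underlying this decomposition: since no edge of $G$ crosses the curve $C$, every vertex and every edge of $G$ lies in $H$ or in $G'$, and $H\cap G'=C$; since $C$ does not bound a face there is a vertex or edge of $G$ drawn strictly inside the disk, so $H\neq C$ and the statement ``$H$ is $C$-critical'' is not vacuous; and since every vertex and edge incident with the outer face lies outside the open disk, $K\subseteq G'$.

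Next, fix an arbitrary proper subgraph $H'\subset H$ with $C\subseteq H'$; the goal is to produce a $k$-coloring of $C$ that extends to $H'$ but not to $H$. Put $G_0:=H'\cup G'$. Then $K\subseteq G'\subseteq G_0$, and $G_0$ is a \emph{proper} subgraph of $G$: a vertex or edge of $H$ that is absent from $H'$ cannot lie in $C$ (because $C\subseteq H'$), hence cannot lie in $G'$ (because $H\cap G'=C$), hence lies in $G$ but not in $G_0$, showing $G_0\neq G$. Since $G$ is $K$-critical, there is a $k$-coloring $\psi$ of $K$ that extends to a $k$-coloring $\psi_0$ of $G_0$ but does not extend to any $k$-coloring of $G$.

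Let $\varphi:=\psi_0|_C$. Then $\varphi$ extends to $H'$, namely via $\psi_0|_{H'}$. Suppose for contradiction that $\varphi$ also extends to $H$, say via a $k$-coloring $\chi$ of $H$. Since $\chi$ and $\psi_0|_{G'}$ both restrict to $\varphi$ on $C=H\cap G'$, they combine into a $k$-coloring of $G=H\cup G'$; this coloring restricts to $\psi_0|_K=\psi$ on $K\subseteq G'$, contradicting the choice of $\psi$. Hence $\varphi$ extends to $H'$ but not to $H$. As $H'$ was an arbitrary proper subgraph of $H$ containing $C$, this shows $H$ is $C$-critical for $k$-coloring.

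I expect no conceptual difficulty here; the points that need care are purely bookkeeping. One is checking that $\chi$ and $\psi_0|_{G'}$ really glue to a legitimate coloring of $G$ — this works precisely because $H\cap G'=C$, so the two colorings only have to agree on $C$. The other is verifying $G_0\neq G$ — which works precisely because the element of $H$ missing from $H'$ cannot hide inside $G'$, again thanks to $H\cap G'=C$ and $C\subseteq H'$. The topological claims ($G=H\cup G'$, $H\cap G'=C$, $K\subseteq G'$, and $H\neq C$) are routine consequences of planarity, but they deserve an explicit statement since the whole argument rests on them.
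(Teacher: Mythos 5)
Your proof is correct, and all the delicate points (that $G_0=H'\cup G'$ is a proper subgraph of $G$ containing $K$, that $H\cap G'=C$ so the two colorings glue, and that $H\neq C$ because $C$ is not facial) are handled properly. The paper itself gives no proof of this lemma---it is imported from Dvo\v{r}\'ak et al.~\cite{trfree1}---and your cut-and-paste argument is exactly the standard one used there, so there is nothing substantive to compare.
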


Another way to view $C$-critical graphs is the precoloring extension perspective.
Suppose that we are given a coloring $\psi$ of a proper subgraph $C$ of $G$, does there exist
a coloring of $G$ that matches $\psi$ on $C$?  To answer this question, it suffices to consider
the colorings of a maximal $C$-critical subgraph of $G$.

As suggested by Lemma~\ref{lemma-critin}, an important case of the precoloring extension problem
is the one where all precolored vertices are incident with one face of a plane graph (without loss of generality
the outer one).  From now on, we only deal with $3$-colorings in this paper, and thus we usually omit the qualifier
``for $3$-coloring'' when speaking about $C$-critical graphs.

Plane critical graphs of girth 5 with a precolored face of length at most $11$ were enumerated
by Walls~\cite{walls} and independently by Thomassen~\cite{thomassen-surf},
who also gives some necessary conditions for plane graphs of girth 5 with a precolored face of length $12$.  
The exact enumeration of plane graphs of girth 5 with a precolored face of length $12$ appears in
Dvo\v{r}\'ak and Kawarabayashi~\cite{dk}. 
The number of critical graphs grows exponentially with the length of the
precolored face, and enumerating all the plane critical graphs becomes increasingly difficult.
Dvo\v{r}\'ak and Lidick\'{y}~\cite{dvolid} implemented an algorithm to generate such 
plane graphs of girth 5 based on the results of
Dvo\v{r}\'ak and Kawarabayashi~\cite{dk}, and used the computer to enumerate
the plane critical graphs of girth 5 with the outer face of length at most $16$.

In this paper, we consider the same question in the setting of triangle-free graphs.
Aksenov~\cite{aksenov} showed that any precoloring of a face of length
at most $5$ in a plane triangle-free graph extends to the whole graph. Gimbel  and Thomassen~\cite{gimbel} characterized 
plane critical graphs of girth 4 with a precolored face of length $6$.
The faces of a plane graph distinct from the outer one are called \emph{internal}.  
\begin{theorem}[Gimbel and Thomassen~\cite{gimbel}]\label{thm-gimbel}
Let $G$ be a plane triangle-free graph with outer face bounded by a cycle $C=c_1c_2\ldots$ of length at most $6$.
The graph $G$ is $C$-critical if and only if $C$ is a $6$-cycle, all internal faces of $G$ have length exactly four
and $G$ contains no separating $4$-cycles.  Furthermore, if $\varphi$ is a $3$-coloring of $G[V(C)]$
that does not extend to a $3$-coloring of $G$, then $\varphi(c_1)=\varphi(c_4)$, $\varphi(c_2)=\varphi(c_5)$ and $\varphi(c_3)=\varphi(c_6)$.
\end{theorem}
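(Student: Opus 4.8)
The plan is to prove both directions together with the closing assertion, using Aksenov's theorem~\cite{aksenov} (every precolouring of a face of length at most $5$ of a plane triangle-free graph extends) as a black box. I will use repeatedly the elementary fact that, for $C\subseteq H\subseteq G$, any $3$-colouring of $C$ extending to $G$ also extends to $H$; writing $B(G)$ for the set of $3$-colourings of $C$ not extending to $G$, this means $B(\cdot)$ is monotone under passing to subgraphs containing $C$, so (for $G\neq C$) $G$ is $C$-critical if and only if $B(G)\neq\emptyset$ and $B(G-e)\subsetneq B(G)$ for every $e\in E(G)\setminus E(C)$. For the forward direction, if $|C|\le 5$ then $B(G)=\emptyset$ by Aksenov's theorem, so a $C$-critical $G$ has $|C|=6$ (the value $3$ being excluded by triangle-freeness). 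A $C$-critical $G$ is $2$-connected, since a component or an end-block of $G$ meeting $C$ in at most one vertex is a triangle-free planar graph, hence $3$-colourable by Gr\"otzsch's theorem~\cite{grotzsch1959}, and a lone precoloured vertex always extends in a $3$-colourable graph, so deleting such a piece would not change $B(G)$. Similarly $G$ has no separating $4$-cycle $D$, as the subgraph of $G$ inside the disc bounded by $D$ has $D$ as its outer face of length $4$, whence by Aksenov every precolouring of $D$ extends inside and deleting the interior of $D$ would not change $B(G)$.

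The substantial part of the forward direction is that every internal face of a $C$-critical $G$ has length exactly $4$. Suppose some internal face $f$ has length $\ell\ge 5$; by $2$-connectivity it is bounded by a cycle $D=d_1d_2\cdots d_\ell$, and since $f$ is internal while $C$ is the outer face, $D\neq C$, so $D$ has an edge $e=d_1d_2\notin E(C)$. I would show $B(G-e)=B(G)$, contradicting criticality: given a $3$-colouring $\psi$ of $G-e$ -- which already colours $G$ unless $\psi(d_1)=\psi(d_2)$ -- one uses that the open disc bounded by $D$ is empty in $G$ and that $\ell\ge 5$ to alter $\psi$ on $V(D)$ only, by recolouring $d_1$ or $d_2$, or by a Kempe exchange along a bichromatic subpath of $D$, into a $3$-colouring of $G$ agreeing with $\psi$ on $V(C)$. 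Where this fails, or where some $d_i$ lies on $C$, one instead identifies two non-adjacent vertices of $D$ at distance $2$ with no common neighbour besides the evident one, obtaining a smaller plane triangle-free graph whose colourings pull back to $G$ and which contradicts the minimality inherent in criticality, treating the case in which every such identification would create a triangle by noting that the offending length-$2$ path closes into a separating short cycle already excluded. I expect this step -- the customary reducibility bookkeeping of such arguments -- to be the main obstacle.

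From here a $C$-critical $G$, and more generally any plane graph with outer hexagon $C=c_1\cdots c_6$ and all internal faces of length $4$, has all faces of even length, hence is bipartite, the colour classes of $C$ being $\{c_1,c_3,c_5\}$ and $\{c_2,c_4,c_6\}$. Viewing a $3$-colouring $\psi$ as a homomorphism to $C_3=\mathbb Z_3$, assign to each oriented edge $uv$ the unique $\delta(uv)\in\{+1,-1\}$ with $\delta(uv)\equiv\psi(v)-\psi(u)\pmod 3$; then $\delta(uv)=-\delta(vu)$, the sum of $\delta$ around a closed walk is $3$ times its winding number in $C_3$, and this sum is $0$ around every $4$-face (four values $\pm1$ summing to $0$ modulo $3$ must sum to $0$). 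Summing $\delta$ around all face boundaries with consistent orientations gives $0$, so the sum around $C$ equals $\pm$ the sum over the internal $4$-faces, namely $0$; but an \emph{antipodal} colouring of $C$, one with $\varphi(c_1)=\varphi(c_4)$, $\varphi(c_2)=\varphi(c_5)$, $\varphi(c_3)=\varphi(c_6)$ (which necessarily uses all three colours), has $\delta$-sum $\pm6\neq0$ around $C$. Hence no antipodal colouring extends to $G$, so $B(G)\neq\emptyset$. Conversely, a standard inductive extension argument in the style of Aksenov's proof -- split along any chord of $C$ into quadrangulations of $4$-cycles, handled by Aksenov, and otherwise peel off a face incident with $C$, keeping track that the precolouring stays non-antipodal -- shows every non-antipodal $3$-colouring of $G[V(C)]$ extends to $G$; since any $3$-colouring of $G[V(C)]$ restricts to one of $C$, this yields the closing assertion that a non-extendable precolouring of $G[V(C)]$ satisfies $\varphi(c_1)=\varphi(c_4)$, $\varphi(c_2)=\varphi(c_5)$, $\varphi(c_3)=\varphi(c_6)$.

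Finally, for the backward direction, suppose $G$ has outer hexagon $C$, all internal faces of length $4$, and no separating $4$-cycle. Then $B(G)\neq\emptyset$ by the previous paragraph, so if $G$ were not $C$-critical there would be $e\in E(G)\setminus E(C)$ with $B(G-e)=B(G)\neq\emptyset$; hence $G-e$ contains a minimal subgraph $G^\ast$ with $C\subseteq G^\ast\subseteq G-e$ such that some $3$-colouring of $C$ does not extend to $G^\ast$, and such a minimal $G^\ast$ is $C$-critical. By the forward direction $G^\ast$ is $2$-connected with outer hexagon $C$ and all internal faces of length $4$, and $E(G^\ast)\subseteq E(G)\setminus\{e\}\subsetneq E(G)$. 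Pick $e'\in E(G)\setminus E(G^\ast)$. Since $G$ has nothing drawn outside $C$, the curve $e'$ lies in the interior of some internal face $\phi$ of $G^\ast$; as $G^\ast$ is $2$-connected, $\phi$ is bounded by a $4$-cycle, which is therefore a cycle of $G$ having $e'$ strictly inside it, so it does not bound a face of $G$ -- a separating $4$-cycle, contrary to hypothesis. Hence $G$ is $C$-critical, which completes the proof.
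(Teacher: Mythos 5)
First, a point of reference: the paper does not prove Theorem~\ref{thm-gimbel}; it quotes it from Gimbel and Thomassen~\cite{gimbel} (also Aksenov, Borodin and Glebov~\cite{aksenov6cyc}), so your attempt must stand on its own. Several of your ingredients are correct and cleanly done: the reduction of $C$-criticality to the condition $B(G-e)\subsetneq B(G)$ for edges $e\notin E(C)$, the exclusion of $|C|\le 5$ via Aksenov's theorem, the $2$-connectivity and separating-$4$-cycle reductions, the winding-number computation showing that an antipodal colouring of a hexagonal outer face has $\delta$-sum $\pm 6$ and therefore cannot extend to a quadrangulation (this is exactly the $\delta(f)$ bookkeeping the paper sets up in Section~\ref{sec:flows}), and the deduction of the backward direction from the forward direction together with this non-extendability.

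The genuine gap is precisely where you flag it, and it is not mere bookkeeping: the statement that a $C$-critical graph with $|C|=6$ has no internal face of length at least $5$ is the substantive content of the theorem (a strengthening of Gr\"otzsch's theorem), and your sketch of it does not go through. A Kempe exchange in a $3$-colouring is not a local operation: the bichromatic component you would switch can leave $V(D)$ and reach $C$, so ``alter $\psi$ on $V(D)$ only'' is not available; and the fallback of identifying two vertices of $D$ at distance two cannot be justified by ``the minimality inherent in criticality,'' because criticality of the single graph $G$ only yields information about subgraphs of $G$ containing $C$, whereas the identified graph is not a subgraph of $G$. To use that reduction you must run a genuine induction on the order of the graph over the whole class (which is how \cite{gimbel} and \cite{aksenov6cyc} actually argue), and setting up and closing that induction is the bulk of the proof. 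The same applies to the ``furthermore'' clause: your claim that every non-antipodal colouring of $G[V(C)]$ extends is asserted via ``peel off a face, keeping track that the precolouring stays non-antipodal,'' but when one removes a $4$-face or identifies its opposite vertices the outer face length and the precolouring change in ways that need exactly the kind of careful control you have deferred; this clause does not follow from the parts you did prove, and your backward direction also leans on the unproven face-length step (applied to $G^\ast$). As it stands, the proposal is a plausible outline whose two central steps remain unestablished.
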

The previous result was independently obtained by Aksenov, Borodin, and Glebov~\cite{aksenov6cyc}.
The characterization was further extended by Aksenov, Borodin, and Glebov~\cite{bor7c}
to a precolored face of length $7$, see Corollary~\ref{corollary-7cyc} below.
In this paper, we give a simpler proof of this result by exploiting properties of nowhere-zero flows.

Furthermore, as the main result, we extend the characterization to the case of a precolored face of length $8$.
For a plane graph $G$, let $S(G)$ denote the multiset of lengths of the internal $(\ge\!5)$-faces of $G$.

\begin{theorem}\label{thm:8cycleprecise}
Let $G$ be a connected plane triangle-free graph with outer face bounded by a cycle $C$ of length $8$.  The graph $G$ is $C$-critical
if and only if $G$ contains no separating cycles of length at most five, the interior of every non-facial 6-cycle
contains only faces of length four and one of the following propositions is satisfied (see Figure~\ref{fig-8cycle} for an illustration).
\begin{itemize}
 \item[\textrm{(a)}] $S(G)=\emptyset$, or
 \item[\textrm{(b)}] $S(G)=\{6\}$ and the $6$-face of $G$ intersects $C$ in a path of length at least one, or
 \item[\textrm{(c)}] $S(G)=\{5,5\}$ and each of the $5$-faces of $G$ intersects $C$ in a path of length at least two, or
 \item[\textrm{(d)}] $S(G)=\{5,5\}$ and the vertices of $C$ and the $5$-faces $f_1$ and $f_2$ of $G$ can be labelled
 in clockwise order along their boundaries so that $C=c_1c_2\ldots c_8$, $f_1=c_1v_1zv_2v_3$ and $f_2=zw_1c_5w_2w_3$
 (where $w_1$ can be equal to $v_1$, $v_1$ can be equal to $c_2$, etc.)
\end{itemize}
\end{theorem}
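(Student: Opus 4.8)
The plan is to translate everything into nowhere-zero flows, as in our simpler proof of the $7$-cycle case (Corollary~\ref{corollary-7cyc}). Fix a reference orientation of $G$. For a map $\psi\colon V(C)\to\mathbb{Z}_3$, a proper $3$-coloring $\varphi$ of $G$ with $\varphi|_{V(C)}=\psi$ corresponds to a nowhere-zero $\mathbb{Z}_3$-tension of $G$, namely the edge function $uv\mapsto\varphi(v)-\varphi(u)$, whose restriction to $E(C)$ is prescribed by $\psi$; dually this is a nowhere-zero $\mathbb{Z}_3$-flow of the dual $G^*$ whose values on the edges incident with the vertex $f_0$ dual to the outer face are prescribed by $\psi$. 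Thus ``$\psi$ extends to $G$'' becomes ``a prescribed partial nowhere-zero $\mathbb{Z}_3$-flow on the star of $f_0$ completes to a nowhere-zero $\mathbb{Z}_3$-flow of $G^*$''. The value of this reformulation is rigidity: a $4$-face of $G$ is a degree-$4$ vertex of $G^*$, and at such a vertex every nowhere-zero $\mathbb{Z}_3$-flow has exactly two edges entering and two leaving, so $4$-faces carry no freedom; all slack resides at $f_0$ and at the faces counted in $S(G)$.

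For the necessity direction, suppose $G$ is $C$-critical. As $G$ is triangle-free, every internal face has length at least $4$. If $G$ had a separating cycle $C'$ of length at most $5$, then by Lemma~\ref{lemma-critin} the subgraph drawn in the closed disk bounded by $C'$ would be $C'$-critical and strictly larger than $C'$, contradicting the theorem of Aksenov~\cite{aksenov} that precolorings of faces of length at most $5$ in plane triangle-free graphs always extend; similarly, if $C'$ is a non-facial $6$-cycle, Lemma~\ref{lemma-critin} and Theorem~\ref{thm-gimbel} force every face inside $C'$ to be a $4$-face. Euler's formula shows that $\sum_f(\ell(f)-4)$ is even; since the outer face contributes $4$, the quantity $\sum_{f\in S(G)}(\ell(f)-4)$ is even, and a discharging argument on $G^*$---$4$-faces are rigid degree-$4$ vertices that neither emit nor absorb charge, the absence of separating $(\le\!5)$-cycles of $G$ rules out the short edge-cuts of $G^*$ through which imbalance might escape, and $f_0$ contributes a fixed deficit---shows that this quantity is at most $2$. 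Hence $S(G)\in\{\emptyset,\{6\},\{5,5\}\}$. It remains to pin down the positions of the $(\ge\!5)$-faces: a local analysis of how the boundary data at $f_0$ propagates into the non-rigid vertices of $G^*$ besides $f_0$ (there are at most two of them) shows that if those faces met $C$ in too little---a $6$-face disjoint from $C$ or meeting it only in isolated vertices, a $5$-face meeting $C$ in just a vertex or an edge, two $5$-faces arranged other than as in (c) or (d)---then every partial flow would complete, so no precoloring of $C$ would fail to extend, contradicting criticality. This leaves exactly (a)--(d).

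For the sufficiency direction, assume $G$ satisfies the stated conditions. We first produce a precoloring of $C$ that does not extend. In case (a), $G$ is a quadrangulation of the disk bounded by $C$; because of the rigidity at the $4$-faces the representatives in $\{\pm1\}$ of the potential differences of any extension are the differences of an integer-valued function on $V(G)$, whose net change around $C$ is $0$, so any ``unbalanced'' precoloring---one for which the $\{\pm1\}$-representatives of the differences $\psi(c_{i+1})-\psi(c_i)$ sum to $\pm6$ rather than $0$, for example $\psi=(1,2,3,1,2,3,1,2)$ along $C$---does not extend. In cases (b)--(d) we start from an unbalanced precoloring and adjust its values along the part of $C$ incident with the $(\ge\!5)$-face(s) so that the resulting boundary data cannot be reconciled at the at most two non-rigid vertices of $G^*$; the hypotheses---a shared edge with $C$ in (b), two shared edges in (c), the positions at $c_1$ and $c_5$ together with the common vertex $z$ in (d)---are exactly what makes such an adjustment available. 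We then verify edge-criticality: for $e\in E(G)\setminus E(C)$, deleting $e$ merges the two faces bordering $e$ into one larger face, which relaxes the constraint at the corresponding vertex of $G^*$ enough to admit unbalanced boundary data; choosing a precoloring that is unbalanced on a portion of $C$ far from $e$---the structure permits localizing the obstruction---yields a precoloring extending to $G-e$ but not to $G$. Together with the non-extending precoloring, this shows $G$ is $C$-critical.

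The step I expect to be the main obstacle is the concluding structural part of the necessity direction, together with the matching constructions in the sufficiency direction, in the case $S(G)=\{5,5\}$: the two $5$-faces may share an edge, share a single vertex, or be disjoint, yet only the arrangements (c) and (d) produce $C$-critical graphs, so separating these subcases and determining the precise incidences with $C$---including the degenerate identifications allowed in (d)---calls for a fairly intricate and essentially exhaustive local analysis of the completed $\mathbb{Z}_3$-flow near the two non-rigid dual vertices. A secondary difficulty is making the discharging on $G^*$ sharp enough to deliver the exact bound $\sum_{f\in S(G)}(\ell(f)-4)\le2$ instead of a weaker inequality.
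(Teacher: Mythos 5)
Your general framework---recasting extendability of $\psi$ as completing a partial nowhere-zero $\mathbb{Z}_3$-flow in the dual, with all freedom concentrated at the outer face and at the $(\ge\!5)$-faces---is the same one the paper uses (Section~\ref{sec:flows}), but the proposal stops at exactly the two places where the theorem is hard. First, your claim that a discharging argument on $G^\star$ yields $\sum_{f\in S(G)}(|f|-4)\le 2$ is not a proof: rigidity of $4$-faces plus the absence of short separating cycles does not by itself exclude, say, $S(G)=\{6,6\}$, $\{5,7\}$ or four $5$-faces, and nothing in your sketch derives a contradiction with criticality in those cases. This bound is precisely Lemma~\ref{lemma-faces} (imported from Dvo\v{r}\'ak et al.), a substantial external result which the paper cites rather than reproves; you neither cite it nor supply a workable substitute. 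Second, the core of the necessity direction---showing that when $S(G)=\{5,5\}$ the only admissible incidences with $C$ are (c) and (d), including the exact configuration $f_1=c_1v_1zv_2v_3$, $f_2=zw_1c_5w_2w_3$ with the common vertex $z$---is the part you yourself flag as ``the main obstacle'' and do not carry out. The paper does this quantitatively: for a non-extending $\psi$ it first rules out seven source edges, then works with the two $\psi$-balanced layouts $q_1,q_2$, applies Lemma~\ref{lemma-cut} to get cut paths $K_1,K_2$, and extracts inequalities such as $n^s_i-n^t_i+1\ge|K_i|$ on the arcs of $C$ determined by their endpoints; the case analysis on the cyclic order of the endpoints and the tightness of these inequalities is what forces $K_i+P_i$ to be the $5$-faces and yields (c) versus (d). None of this, nor any replacement for it, appears in the proposal.

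The sufficiency direction is likewise incomplete. Producing one non-extending precoloring (your ``unbalanced'' coloring; this part is fine in case (a)) is the easy half; $C$-criticality requires that every proper subgraph containing $C$ admit a coloring of $C$ extending to it but not to $G$. Your plan to ``localize the obstruction away from a deleted edge'' is unsupported: in case (a) the paper needs the no-separating-$4$-cycle hypothesis to build, for an arbitrary proper subgraph, a coloring of $C$ with seven sink edges and one source edge via six edge-disjoint paths in the dual (Theorem~\ref{thm:only4}), case (b) rests on the full analysis of Theorem~\ref{thm:n-2}, and in cases (c),(d) the paper avoids per-edge constructions entirely by showing that any $C$-critical subgraph $G_0\subseteq G$ must equal $G$, again using Lemma~\ref{lemma-faces} together with the hypotheses on separating $(\le\!5)$-cycles and on $6$-cycles. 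As written, your text is an outline in the paper's spirit rather than a proof; the missing pieces are precisely the paper's main technical content.
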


\begin{figure}
\center{\includegraphics[scale=1]{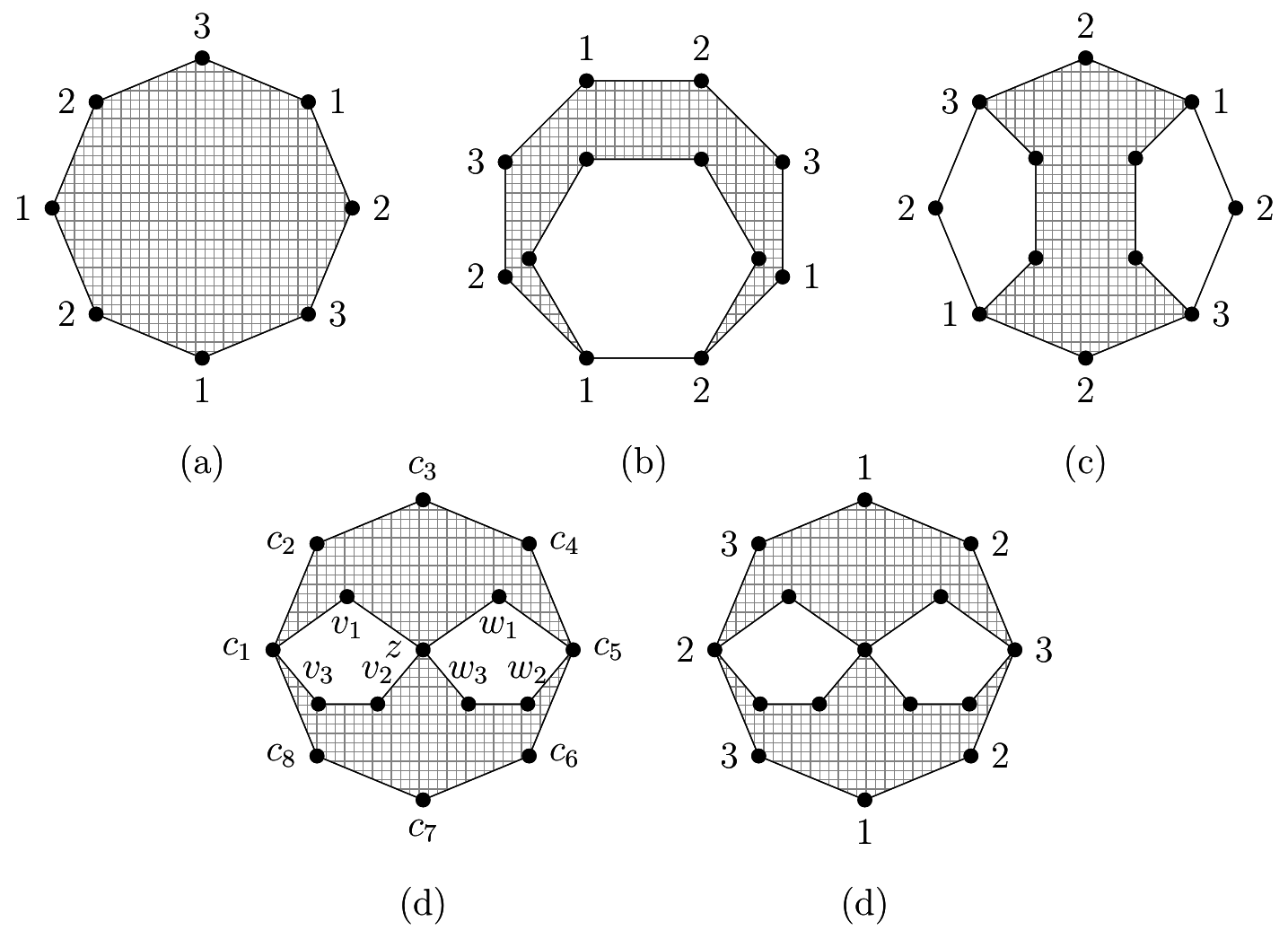}}
\caption{Graph described by Theorem~\ref{thm:8cycleprecise} and examples of $3$-colorings of $C$ that do not extend.}\label{fig-8cycle}
\end{figure}

We describe a connection between nowhere-zero flows and 3-colorings in Section~\ref{sec:flows}. We follow with Section~\ref{sec:faces}
constraining lengths of faces in critical graphs.
Section~\ref{sec:k-2} is devoted to the special case where the precolored
face has length $k$, one other face has length $k-2$ and the rest are 4-faces.
Finally, in Section~\ref{sec:8face} we give a proof of Theorem~\ref{thm:8cycleprecise}.

%%%%%%%%%%%%%%%%%%%%%%%%%%%%%%%%%%%%%%%%%%%%%%%%%%%%%%%%
%%%%%%%%%%%%%%%%%%%%%%%%%%%%%%%%%%%%%%%%%%%%%%%%%%%%%%%%
%%%%%%%%%%%%%%%%%%%%%%%%%%%%%%%%%%%%%%%%%%%%%%%%%%%%%%%%
%%%%%%%%%%%%%%%%%%%%%%%%%%%%%%%%%%%%%%%%%%%%%%%%%%%%%%%%
%%%%%%%%%%%%%%%%%%%%%%%%%%%%%%%%%%%%%%%%%%%%%%%%%%%%%%%%
\section{3-colorings and nowhere-zero flows}\label{sec:flows}

Let $H$ be a connected plane graph without loops and let $H^\star$ be the dual of $H$.
Let $\varphi$ be a $3$-coloring $\varphi$ of $H$ by colors $\{1,2,3\}$.
Let us define an orientation of $H^\star$ as follows.  Let $e\in E(H)$ be an edge
incident with a vertex $u\in V(H)$ and let $f$ and $h$ be the faces incident with $e$,
appearing as $f,e,h$ in the clockwise order around $u$ in the drawing of $H$.
Let $v$ be the other vertex of $G$ incident with $e$ and let $e^\star$ be the edge of $H^\star$ corresponding to $e$.
The edge $e^\star$ is oriented towards $h$ if and only if $\varphi(u)-\varphi(v)\in \{1,-2\}$.
Suppose that $h$ is the outer face of $H$ and $h$ is bounded by a cycle; if $e^\star$ is oriented towards $h$, then we say that $e$ is
a \emph{sink edge}, otherwise $e$ is a \emph{source edge}.  Note that whether an edge is source
or sink depends only on the restriction of $\varphi$ to the boundary of the outer face.

Since $\varphi$ is a proper coloring, every edge of $H^\star$ has an orientation.
As shown by Tutte~\cite{tutteflow}, this orientation of $H^\star$ defines a nowhere-zero $\mathbb Z_3$-flow
that is, for each $f\in V(H^\star)$, the in-degree and the out-degree of $f$ differ by a multiple of $3$;
and conversely, each nowhere-zero $\mathbb Z_3$-flow in $H^\star$ defines a $3$-coloring of $H$,
uniquely up to a rotation of colors.  See e.g.~\cite{diestel} for more details.

Consider a face $f$ of $H$, and let $\delta(f)$ be the difference between its in-degree and out-degree when
considered as a vertex of $H^\star$.  Clearly, $\delta(f)$ and $|f|$ have the same parity and $|\delta(f)|\le |f|$.
Since $\delta(f)$ is a multiple of $3$, if $f$ is a $4$-face, then $\delta(f)=0$.
Similarly, if $|f|=5$ or $|f|=7$, then $|\delta(f)|=3$, and if $|f|=6$ or $|f|=8$, then
$|\delta(f)|\in \{0,6\}$.

Let $G$ be a connected plane triangle-free graph with the outer face $C$ bounded by a cycle.
We say that a function $q$ assigning an integer to each internal face of $G$ is a \emph{layout}
if each internal face $f$ satisfies $|q(f)|\le|f|$, $q(f)$ is divisible by $3$ and has the same parity as $|f|$.
In particular, $q(f)=0$ for every $4$-face, and thus it suffices to specify the values of $q$ for
$(\ge\!5)$-faces of $G$.  Consider a proper $3$-coloring $\psi$ of $C$, let $n_s$ be the number of
source edges and $n_t$ the number of sink edges of $C$ with respect to $\psi$ and let $m$ be the sum of the values of $q$
over all internal faces of $G$.  We say that $q$ is \emph{$\psi$-balanced} if $n_s+m=n_t$.
We define a graph $G^{q,\psi}$ as follows.  The vertex set of $G^{q,\psi}$ consists of the internal faces of $G$
and of two new vertices $s$ and $t$.  The adjacencies between vertices of  $V(G^{q,\psi})\setminus \{s,t\}$
are the same as in the dual of $G$.  For each internal face $f$ with $q(f)>0$, $s$ is joined to $f$ by $q(f)$
parallel edges.  For each internal face $f$ with $q(f)<0$, $t$ is joined to $f$ by $-q(f)$
parallel edges.  For each edge $e\in E(C)$ incident with an internal face $f$,
$f$ is joined to $t$ if $e$ is a sink edge with respect to $\psi$, and $f$ is joined to $s$ otherwise.
See Figure~\ref{fig-network} for an illustration.
We say that $s$ and $t$ are the \emph{terminals} of $G^{q,\psi}$ and we write $c(q,\psi)$ for the degree of $s$ in $G^{q,\psi}$.
Note that $q$ is $\psi$-balanced if and only if $s$ and $t$ have the same degree.

\begin{figure}
\center{\includegraphics[scale=1]{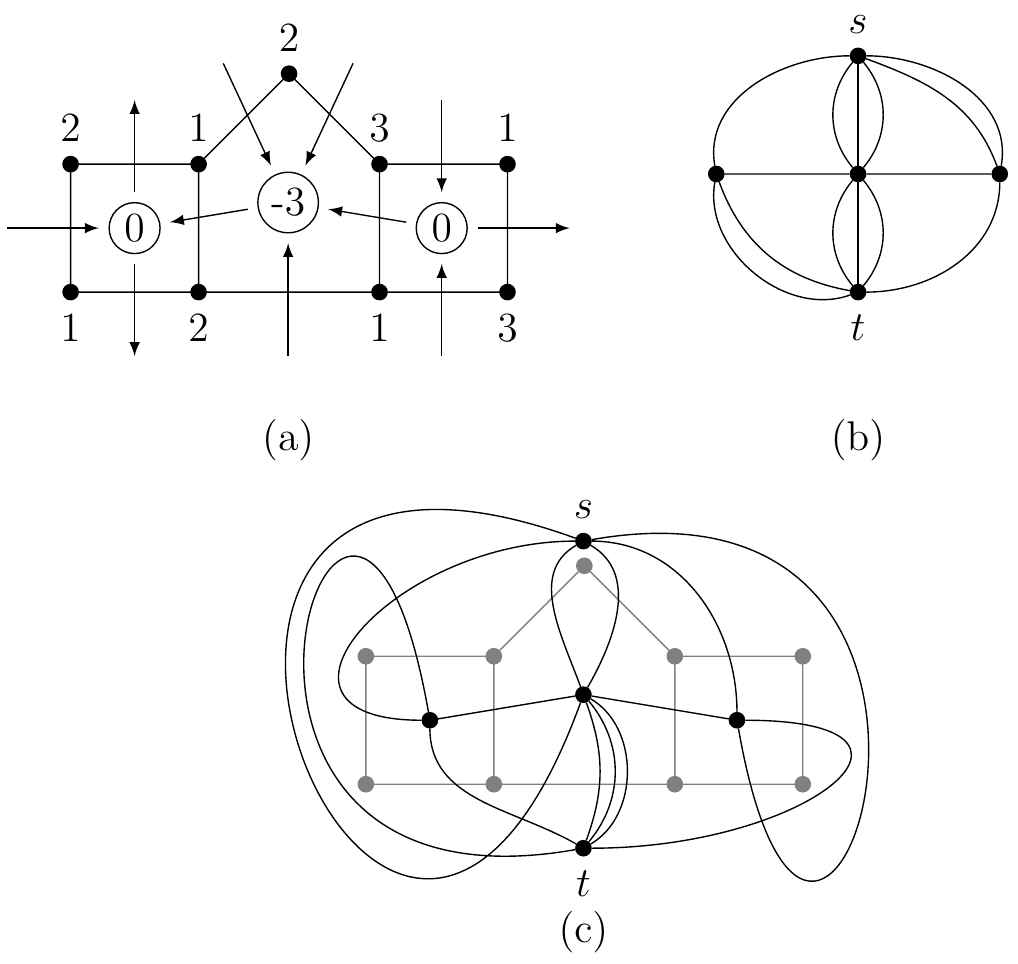}}
\caption{(a): A $3$-coloring $\psi$ of a graph $G$ with the corresponding orientation of edges of $G^\star$ and the balanced layout $q$. (b): the corresponding graph $G^{q,\psi}$. (c)  $G^{q,\psi}$ together with $G$.}\label{fig-network}
\end{figure}

\begin{lemma}\label{lemma-flowcol}
Let $G$ be a connected plane triangle-free graph with the outer face $C$ bounded by a cycle and let $\psi$ be
a $3$-coloring of $C$.  The coloring $\psi$ extends to a $3$-coloring of $G$ if and only if
there exists a $\psi$-balanced layout $q$ such that the terminals of $G^{q,\psi}$
are not separated by an edge-cut smaller than $c(q,\psi)$.
\end{lemma}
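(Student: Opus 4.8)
The plan is to translate the precoloring-extension question into the language of $\mathbb{Z}_3$-flows via Tutte's correspondence, and then recognize the resulting flow-existence condition as a max-flow/min-cut statement in the auxiliary network $G^{q,\psi}$. First I would set up the forward direction: suppose $\psi$ extends to a $3$-coloring $\varphi$ of $G$. Then $\varphi$ induces an orientation of $G^\star$ which, by Tutte's theorem as recalled in Section~\ref{sec:flows}, is a nowhere-zero $\mathbb{Z}_3$-flow; equivalently, at every internal face $f$ the signed discrepancy $\delta(f)$ between in- and out-degree is a multiple of $3$ with $|\delta(f)|\le |f|$ and $\delta(f)\equiv |f|\pmod 2$. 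Define $q(f):=\delta(f)$ for each internal face; this is a layout by construction. I would then check that $q$ is $\psi$-balanced: summing $\delta(f)$ over all internal faces, every dual edge interior to $G$ contributes $+1$ to one face and $-1$ to another and cancels, so the total sum $m$ equals the net contribution of the boundary edges, which is exactly $n_t-n_s$ (each sink edge contributes $+1$, each source edge $-1$, to the interior side). Hence $n_s+m=n_t$. Finally, to produce an edge-cut lower bound, I would read off from the flow an integer-valued function on the edges of $G^{q,\psi}$: orient each parallel edge at $s$ out of $s$, each at $t$ into $t$, each boundary edge toward $t$ if it is a sink edge and toward $s$ otherwise, and each interior dual edge as in the flow orientation. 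The flow conservation at each internal face (the multiple-of-$3$ condition together with the definition of $q(f)$ as the discrepancy) says exactly that this is a valid $s$--$t$ flow of value $c(q,\psi)$ that saturates every edge; by the max-flow/min-cut theorem no edge-cut separating $s$ and $t$ can be smaller than $c(q,\psi)$.

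For the converse, suppose there is a $\psi$-balanced layout $q$ with no $s$--$t$ edge-cut smaller than $c(q,\psi)$. Since $\psi$-balanced means $\deg_{G^{q,\psi}}(s)=\deg_{G^{q,\psi}}(t)=c(q,\psi)$, and min-cut $\ge c(q,\psi)$ forces min-cut $=c(q,\psi)$, the max-flow/min-cut theorem gives an integral $s$--$t$ flow of value $c(q,\psi)$, which must saturate every edge incident to $s$ and every edge incident to $t$. I would then define an orientation of every dual edge $e^\star$ of $G^\star$: for an interior edge use the direction of the flow on the corresponding edge of $G^{q,\psi}$ (choosing arbitrarily among parallel edges is irrelevant since they are not present for interior edges); for a boundary edge, its orientation is forced by whether it is a source or sink edge of $\psi$. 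The key verification is that this orientation is a nowhere-zero $\mathbb{Z}_3$-flow on $G^\star$: at an internal face $f$, the incoming minus outgoing count among interior dual edges equals (by flow conservation at $f$ in $G^{q,\psi}$, after subtracting the $q(f)$ saturated terminal edges and the saturated boundary edges) a value congruent to $q(f)\pmod{?}$ — more precisely, the net flow into $f$ from $s$ and $t$ and the boundary is $q(f)$ plus the boundary contribution, so the interior discrepancy at $f$ equals $q(f)$ minus that boundary contribution, and since $q(f)$ is divisible by $3$ this discrepancy is divisible by $3$ after accounting for the boundary edges in the standard way. By Tutte's theorem this flow corresponds to a $3$-coloring $\varphi$ of $G$, unique up to rotation of colors; and since the orientation on the boundary edges was forced by $\psi$, one of the three rotations of $\varphi$ restricts to $\psi$ on $C$. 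That rotation is the desired extension.

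The main obstacle I expect is bookkeeping the signs and the "rotation of colors" ambiguity at the interface between the boundary cycle $C$ and the interior. Tutte's correspondence recovers a coloring only up to a global permutation of $\{1,2,3\}$ by a cyclic rotation, so one must argue carefully that among the three rotations exactly the right one matches $\psi$ on $C$ — this hinges on the fact that source/sink status of a boundary edge depends only on $\psi\restriction_{V(C)}$ and is invariant under color rotation, so once the orientation of the dual cycle around the outer face is pinned down, rotating colors cycles through the three colorings of $C$ consistent with that orientation, one of which is $\psi$ itself (here one uses that $C$ is a cycle, so its three rotation-classes of $3$-colorings are in bijection with the consistent boundary orientations). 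A secondary, purely technical point is verifying that the parity and divisibility constraints defining a layout are exactly what is needed for flow conservation to translate back into the $\mathbb{Z}_3$-flow condition; this is where the observations about $\delta(f)$ for faces of length $4,5,6,7,8$ recorded just before the lemma get used, though the lemma itself only needs the general divisibility-by-$3$ and parity facts, not the face-length-specific consequences.
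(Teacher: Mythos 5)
Your forward direction is essentially the paper's argument: build the layout $q$ from the coloring, check $\psi$-balancedness by a boundary-summation, and observe that the induced orientation of $G^{q,\psi}$ is itself an $s$--$t$ flow of value $c(q,\psi)$ saturating every edge, so no smaller cut can separate the terminals. (Your sign bookkeeping for which boundary edges contribute $+1$ is only correct under one of the two in/out conventions, but the paper is equally casual about this, and it is harmless.)

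The converse, however, has a genuine gap. Max-flow/min-cut gives you an integral flow of value $c(q,\psi)$ that saturates all edges at $s$ and $t$, but it need not use every edge of $G^{q,\psi}-\{s,t\}$: interior dual edges carrying zero flow receive no orientation under your rule, yet the correspondence with $3$-colorings requires an orientation of \emph{every} edge of $G^\star$ --- an unoriented dual edge would correspond to an edge of $G$ whose endpoints receive the same color, which is exactly what ``nowhere-zero'' forbids. This is also why your verification of the divisibility-by-$3$ condition at each internal face is left incomplete (your own text stalls at ``congruent to $q(f)\pmod{?}$''): the discrepancy computation only accounts for edges carrying flow. The missing step is precisely where the parity clause in the definition of a layout is used, and it is not the ``secondary, purely technical point'' you defer at the end: since $q(f)\equiv|f|\pmod 2$, every internal face has even degree $|f|+|q(f)|$ in $G^{q,\psi}$, so after deleting the edges of a path decomposition of the flow (equivalently, the $c(q,\psi)$ edge-disjoint $s$--$t$ paths given by Menger, as the paper does) the leftover graph has all degrees even, hence decomposes into edge-disjoint cycles; orienting each such cycle consistently orients the remaining dual edges while adding equally to in- and out-degree at every face, and only then does Tutte's correspondence yield the extension. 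With that step inserted, your argument coincides with the paper's proof (Menger's theorem being the unit-capacity form of max-flow/min-cut); your concluding concern about the rotation of colors is legitimate but is handled correctly by your own remark, since the boundary orientation determines the color differences along $C$ and connectivity of $C$ pins down $\psi$ among the three rotations.
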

\begin{proof}
Suppose first that $\psi$ extends to a $3$-coloring $\varphi$ of $G$.  Let us orient
$G^\star$ according to $\varphi$ as described at the beginning of the section.  For each internal face $f$ of $G$, let $q(f)$ be the
difference between the in-degree and the out-degree of $f$.  Clearly, $q$ is a layout.
Let $s$ and $t$ be the terminals of $G^{q,\psi}$, let us orient the edges of $G^{q,\psi}$ incident
with $s$ away from $s$ and the edges incident with $t$ towards $t$, and orient all the other
edges of $G^{q,\psi}$ in the same way as in the orientation of $G^\star$.  Note that for each vertex
$f\in V(G^{q,\psi})\setminus\{s,t\}$, the in-degree of $f$ in $G^{q,\psi}$ is equal to its out-degree.
Consequently, $s$ and $t$ have the same degree and the orientation of edges of $G^{q,\psi}$ defines a
flow of size $c(q,\psi)$ from $s$ to $t$.  Therefore, every edge-cut between $s$ and $t$ in $G^{q,\psi}$
has capacity at least $c(q,\psi)$.

Let us now conversely assume that there exists a $\psi$-balanced layout $q$ such that the terminals of $G^{q,\psi}$
are not separated by an edge-cut smaller than $c(q,\psi)$.  By Menger's theorem,
$G^{q,\psi}$ contains $c(q,\psi)$ pairwise-edge disjoint paths $P_1$, \ldots, $P_{c(q,\psi)}$ from $s$ to $t$.
Let $G'=G^{q,\psi}-E(P_1\cup\ldots\cup P_{c(q,\psi)})$.  Since $q(f)$ has the same parity as $|f|$ for every internal
face $f$ of $G$, each vertex of $G'$ has even degree, and thus $G'$ is a union of pairwise edge-disjoint cycles.  For each such cycle, orient all its edges in one (arbitrary) direction.
For each path $P_i$ ($1\le i\le c(q,\psi)$), orient its edges towards $t$.  This defines an orientation of $G^{q,\psi}$,
which gives an orientation of $G^\star$ corresponding to a nowhere-zero $\mathbb Z_3$-flow consistent with the coloring $\psi$.
By the correspondence between flows and colorings, this defines a $3$-coloring of $G$ that extends $\psi$.
\end{proof}

Let us remark that for plane triangle-free graphs $G$ such that 
$$\sum_{\mbox{$f$ internal face of $G$}} (|f|-4)$$
is bounded by a constant $c$, Lemma~\ref{lemma-flowcol} can be used to decide in polynomial time
whether a given precoloring $\psi$ of the outer face extends to a $3$-coloring of $G$: try all possible
$\psi$-balanced layouts $q$ for $G$ (whose number is bounded by a function of $c$) and for each of them, decide whether
the terminals of $G^{q,\psi}$ are separated by an edge-cut of size less than $c(q,\psi)$ using a maximum flow algorithm.

In order to apply Lemma~\ref{lemma-flowcol} efficiently, we describe the structure of small edge-cuts separating the terminals
in $G^{q,\psi}$.
\begin{lemma}\label{lemma-cut}
Let $G$ be a connected plane triangle-free graph with the outer face $C$ bounded by a cycle and let $\psi$ be
a $3$-coloring of $C$ that does not extend to a $3$-coloring of $G$.  If $q$ is a $\psi$-balanced layout in $G$,
then there exists a subgraph $K_0\subseteq G$ such that either
\begin{itemize}
\item[\textrm{(a)}] $K_0$ is a path with both ends in $C$ and no internal vertex in $C$, and if $P$ is a path in $C$ joining the endvertices
of $K_0$, $n_s$ is the number of source edges of $P$, $n_t$ is the number of sink edges of $P$ and $m$ is the sum of the values
of $q$ over all faces of $G$ drawn in the open disk bounded by the cycle $P+K_0$, then
$|n_s+m-n_t|>|K_0|$.  In particular, $|P|+|m|>|K_0|$.  Or, 
\item[\textrm{(b)}] $K_0$ is a cycle with at most one vertex in $C$, and if $m$ is the sum of the values
of $q$ over all faces of $G$ drawn in the open disk bounded by $K_0$, then $|m|>|K_0|$.
\end{itemize}
See Figure~\ref{fig-K0} for an illustration.
\end{lemma}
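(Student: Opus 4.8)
The plan is to feed Lemma~\ref{lemma-flowcol} a small edge-cut in $G^{q,\psi}$, translate that cut into a planar region of $G$, and extract $K_0$ from the region by a counting argument. Concretely, since $q$ is $\psi$-balanced and $\psi$ does not extend to a $3$-coloring of $G$, Lemma~\ref{lemma-flowcol} provides an edge-cut $X$ separating the terminals $s,t$ of $G^{q,\psi}$ with $|X|<c(q,\psi)$; let $S\ni s$ and $T\ni t$ be its two sides. Draw $G$ in a closed disk $\Delta$ with $\partial\Delta=C$, and let $\Omega$ be the union of the closed internal faces of $G$ that belong to $S$ in this cut.

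For a union $R$ of closed internal faces of $G$, put $\mathrm{def}(R)=a_R-b_R+M_R-L_R$, where $a_R$ (resp.\ $b_R$) is the number of source (resp.\ sink) edges of $C$ whose unique incident internal face lies in $R$, where $M_R=\sum_{f\subseteq R}q(f)$, and where $L_R$ is the number of edges of $G$ not on $C$ that lie on the boundary of $R$ (equivalently, with exactly one incident face inside $R$). Sorting the edges incident with $s$ and the edges of $X$ by type --- edges dual to internal edges of $G$; edges joining $s$ (resp.\ $t$) to a face incident with a source (resp.\ sink) edge of $C$; and $q$-edges --- yields the identity $c(q,\psi)-|X|=\mathrm{def}(\Omega)$, so $\mathrm{def}(\Omega)\ge 1$. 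In particular $\Omega\neq\emptyset$ and $\Omega\neq\Delta$, since $\mathrm{def}(\emptyset)=0$ and $\mathrm{def}(\Delta)=n_s-n_t+m=0$ because $q$ is $\psi$-balanced.

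The heart of the argument is a localization of $\mathrm{def}(\Omega)$. One associates to $\Omega$ a family of ``elementary'' simple closed curves of two types: (i) $P+K_0$, where $P$ is a subpath of $C$ and $K_0$ is a path of $G$ with both ends on $C$ and no interior vertex on $C$; and (ii) a cycle $K_0$ of $G$ avoiding all edges of $C$ and meeting $C$ in at most one vertex (a curve meeting $C$ along several subpaths is further cut into curves of type (i), using again that $q$ is $\psi$-balanced). Each elementary curve $\beta$ bounds a disk $D_\beta$ inside $\Delta$; set $m_\beta=\sum_{f\subseteq D_\beta}q(f)$, and let $\mu_\beta=m_\beta$ for type (ii), $\mu_\beta=n_s(P)-n_t(P)+m_\beta$ for type (i), where $n_s(P),n_t(P)$ count the source and sink edges of $P$ and $D_\beta$ is the disk bounded by $P+K_0$. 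Tracking how $\mathrm{def}$ behaves over connected components of $\Omega$, over holes of a component, and over the two-arc cut, one obtains $\mathrm{def}(\Omega)=\sum_\beta(\varepsilon_\beta\mu_\beta-|K_\beta|)$ for suitable signs $\varepsilon_\beta\in\{+1,-1\}$, with $K_\beta$ the path or cycle attached to $\beta$. Since the left-hand side is positive, some summand is positive, so $|\mu_\beta|\ge\varepsilon_\beta\mu_\beta>|K_\beta|\ge 0$. Taking $K_0=K_\beta$ (and $P$ as above in type (i)), this is exactly alternative~(b) when $\beta$ has type (ii), and it gives $|n_s+m-n_t|=|\mu_\beta|>|K_0|$ in alternative~(a) when $\beta$ has type (i); moreover $|P|+|m|\ge|n_s-n_t|+|m_\beta|\ge|\mu_\beta|>|K_0|$, the promised ``in particular''.

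The genuine work is the topological bookkeeping behind the last display: making sense of the ``elementary curves'' when $\Omega$ is disconnected or not simply connected and $\partial\Omega$ has branch vertices, checking that each elementary curve truly bounds a disk inside $\Delta$ and is truly a cycle (or a path together with a subpath of $C$ whose interior avoids $C$), and getting the signs $\varepsilon_\beta$ right when passing from an outer boundary curve to an inner one. I would organize this as an induction on the number of faces of $\Omega$, at each step peeling off an innermost elementary curve, so that every case reduces to ones computable by hand: a single face, a disk, a disk with one hole, and a region meeting $C$ in exactly two arcs. (Replacing $X$ by an inclusion-minimal cut, so that $G^{q,\psi}[S]$ and $G^{q,\psi}[T]$ are connected, is harmless and trims a few of these cases.)
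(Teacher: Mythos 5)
Your opening move is sound and matches the paper's starting point: taking a cut $X$ in $G^{q,\psi}$ with $|X|<c(q,\psi)$ from Lemma~\ref{lemma-flowcol}, letting $\Omega$ be the union of faces on the $s$-side, and checking the identity $c(q,\psi)-|X|=\mathrm{def}(\Omega)$ (together with $\mathrm{def}(\Delta)=0$ from $\psi$-balancedness) is correct. But the rest of the lemma is precisely the display you defer: the claim $\mathrm{def}(\Omega)=\sum_\beta(\varepsilon_\beta\mu_\beta-|K_\beta|)$ over ``elementary curves'' is asserted, not proven, and it is the entire content of the statement. For an arbitrary cut, $\Omega$ can be a union of faces whose frontier is not a disjoint union of simple arcs and cycles of the required shape: frontier components can pass through vertices of $C$ in their interior (so they are not paths ``with no internal vertex in $C$''), can touch $C$ in several vertices, can pinch at interior vertices, and the complementary regions need not be disks meeting $C$ in a single arc -- so one cannot simply read the elementary curves off $\partial\Omega$ and ``get the signs right''; the curves have to be extracted from the components of the complement, after a nontrivial connectivity reduction. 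Your proposed induction (``peel an innermost elementary curve'') is not carried out, it is not clear what the inductive measure is (naively alternating between a region and a complementary component need not make progress), and your parenthetical fix -- pass to an inclusion-minimal cut so that both sides are connected -- only gives connectivity of the two sides in $G^{q,\psi}$, i.e.\ possibly only through the terminals $s$ and $t$; it does not give that $\Omega$ or its complement is connected in the internal dual of $G$, which is what one actually needs to force the frontier to be a single path or a single cycle of the stated form.

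A correct argument along your lines exists, but it requires exactly the work you skip: first replace $\Omega$ repeatedly by the complement of one of its complement components (chosen by the counting inequality) until both the region and its complement are edge-connected in the internal dual, and then a planarity argument showing that in this situation the frontier is a single path with both ends on $C$ and no internal vertex on $C$, or a single cycle meeting $C$ in at most one vertex, whose bounded disk consists exactly of the faces of one side. This is essentially what the paper achieves much more cheaply on the cut side: it chooses $K$ to minimize first the number of edges not incident with the terminals and then $|K|$, and two exchange arguments (the claim that each component satisfies $|Y_t|>|Y_s|+|Y_0|$ or $|Y_s|>|Y_t|+|Y_0|$, and then $Y_0=K_0$) force $G^{q,\psi}-\{s,t\}-K$ to have exactly two components, so that $K_0$ dualizes directly to a single path or cycle and the inequalities of (a) and (b) drop out of the degree count. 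As written, your proposal reduces the lemma to an unproven decomposition identity whose verification is harder than the lemma's published proof, so it has a genuine gap.
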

\begin{figure}
\center{\includegraphics[scale=1]{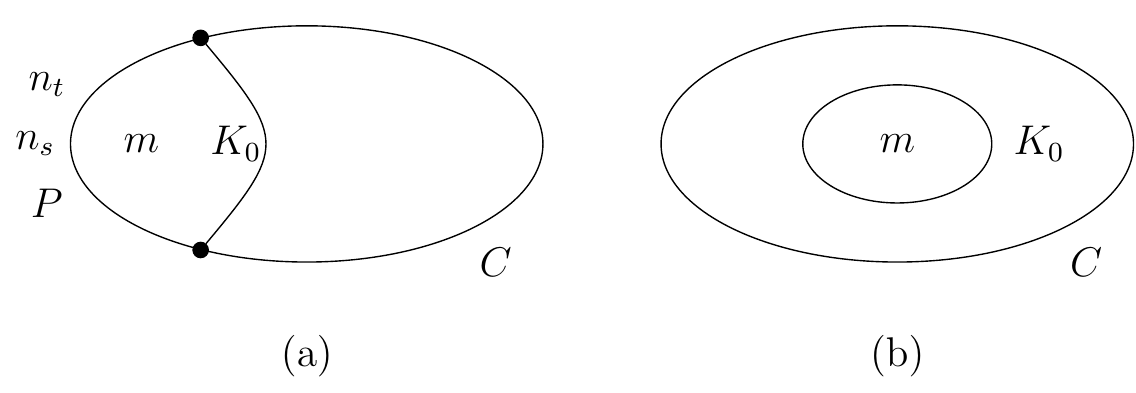}}
\caption{Possibilites in Lemma~\ref{lemma-cut}.}\label{fig-K0}
\end{figure}
\begin{proof}
By Lemma~\ref{lemma-flowcol}, there exists an edge-cut in $G^{q,\psi}$ smaller than $c(q,\psi)$ and separating the terminals.
Let us choose such an edge-cut $K$ so that $K$ has as few edges not incident with the terminals as possible, and subject to that
$|K|$ is minimal.
Let $s$ and $t$ be the terminals of $G^{q,\psi}$ and let $S$ and $T$ be the sets of edges of $G^{q,\psi}$ incident with
$s$ and $t$, respectively.  Let $K_0=K\setminus (S\cup T)$; note that since $|K|<c(q,\psi)$ and $G^{q,\psi}-\{s,t\}$ is connected,
the set $K_0$ is not empty.  Therefore, the minimality of $K_0$ implies that $G^{q,\psi}-\{s,t\}-K$ has at least two components.

Consider a connected component $A$ of $G^{q,\psi}-\{s,t\}-K$.  Let $Y$ be the set of edges of $G^{q,\psi}$ connecting a vertex of $A$
with a vertex of $V(G^{q,\psi})\setminus A$, let $Y_0=Y\cap K_0$, $Y_s=Y\cap S$ and $Y_t=Y\cap T$.
As $G^{q,\psi}-\{s,t\}$ is connected, the set $Y_0$ is not empty.  By the minimality of $K$, either
$Y_s$ is non-empty, $Y_s\cap K=\emptyset$ and $Y_t\subseteq K$, or $Y_t$ is non-empty, $Y_t\cap K=\emptyset$ and $Y_s\subseteq K$.
In the former case we say that $A$ is an $s$-component, in the latter case a $t$-component.

By symmetry, we can assume that $A$ is a $t$-component.  Suppose that $|Y_t|\le |Y_s|+|Y_0|$.  By the minimality of $K_0$,
each edge of $Y_0$ is incident with an $s$-component, and thus $K'=(K\setminus (Y_s\cup Y_0))\cup Y_t$ is an edge-cut
separating $s$ from $t$ of size at most $|K|$.  Furthermore, since $Y_0$ is nonempty, $K'$ would contradict
the assumption that $K$ was chosen with $K_0$ as small as possible.

It follows that
\claim{cl-comp}{each $t$-component satisfies $|Y_t|>|Y_s|+|Y_0|$, and each $s$-component satisfies $|Y_s|>|Y_t|+|Y_0|$.}
Note that $K''=Y_s\cup Y_0\cup (T\setminus Y_t)$ is an edge-cut separating $s$ from $t$ of size $|Y_s|+|Y_0|+|T|-|Y_t|<|T|=c(q,\psi)$.
By the minimality of $K_0$, we have $Y_0=K_0$.  As this observation applies to every connected component of $G^{q,\psi}-\{s,t\}-K$,
we conclude that $G^{q,\psi}-\{s,t\}-K$ has exactly two connected components (an $s$-component and a $t$-component).

We can interpret $K_0$ as a subgraph of $G$ by duality.
Thus, the conclusion of the previous paragraph is equivalent to the claim that the subgraph of $G$ consisting of $C$ and $K_0$ has exactly two internal faces.
It follows that the edges of $K_0$ form either a path joining two distinct vertices of $C$, or a cycle intersecting $C$ in at
most one vertex.  

In the former case, let $P$ be a path in $C$ joining the endvertices of $C$, and let $n_s$, $n_t$ and $m$ be defined as in the
statement (a) of the lemma.  Let $A$ be the component of $G^{q,\psi}-\{s,t\}-K$ whose vertices correspond to the faces of $G$ drawn
in the open disk bounded by $P+K_0$.  Note that $|Y_s|-|Y_t|=n_s-n_t+m$.  By \refclaim{cl-comp}, we have
$|n_s-n_t+m|>|K_0|$.

In the latter case, $G^{q,\psi}-\{s,t\}-K$ has a component $A$ not incident with any edges of $C$; its vertices correspond to the
faces of $G$ drawn in the open disk bounded by $K_0$.  Let $Y_0$, $Y_s$ and $Y_t$ be defined as before, and note that $m=|Y_s|-|Y_t|$.
By \refclaim{cl-comp}, we have $|m|=||Y_s|-|Y_t||>|Y_0|=|K_0|$.
\end{proof}

%%%%%%%%%%%%%%%%%%%%%%%%%%%%%%%%%%%%%%%%%%%%%%%%%%%%%%%%
%%%%%%%%%%%%%%%%%%%%%%%%%%%%%%%%%%%%%%%%%%%%%%%%%%%%%%%%
%%%%%%%%%%%%%%%%%%%%%%%%%%%%%%%%%%%%%%%%%%%%%%%%%%%%%%%%
%%%%%%%%%%%%%%%%%%%%%%%%%%%%%%%%%%%%%%%%%%%%%%%%%%%%%%%%
%%%%%%%%%%%%%%%%%%%%%%%%%%%%%%%%%%%%%%%%%%%%%%%%%%%%%%%%
\section{Faces in critical graphs}\label{sec:faces}

Note that Lemma~\ref{lemma-flowcol} is most useful for graphs with almost all faces of length $4$, as
then there are only a few choices for the layout.  Recall that for a plane graph $G$,  we use $S(G)$ to denote the
multiset of lengths of the internal $(\ge\!5)$-faces of $G$.  Let $\GG_{g,k}$ denote the set of
plane graphs of girth at least $g$ and with outer face formed by a cycle $C$ of length $k$
that are $C$-critical.  Let $\Ss_{g,k}=\{S(G):G\in\GG_{g,k}\}$.  

Note that $\Ss_{5,k}$ is finite for every $k$ by Thomassen~\cite{thomassen-surf}.  However, we only need to know
the sets $\Ss_{5,k}$ for $k\le 8$.
\begin{theorem}[Thomassen~\cite{thom-torus}]\label{thm-thom}
Let $G$ be a plane graph of girth at least five with outer face bounded by a cycle $C$ of length at most $8$.
The graph $G$ is $C$-critical if and only if $G$ is an 8-cycle with a chord.
\end{theorem}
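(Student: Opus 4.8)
The plan is to use the flow machinery from Section~\ref{sec:flows}, specialised to girth at least five. The key observation is that a plane graph of girth at least five with outer face a cycle $C$ of length at most $8$ has very few internal $(\ge\!5)$-faces: by Euler's formula and a discharging argument (counting $\sum_f(|f|-4) + \sum_v(\deg v - 4)$, or equivalently working in the dual), if every internal face has length $4$ then $G$ has no internal vertices at all, so $G$ is a quadrangulation of an $(\le8)$-gon, which in the girth-5 case forces $|C|\ge 8$ and $G$ to be a single $4$-face glued... no, rather forces $|C|$ even and $G$ to consist of $4$-faces only — but then girth $5$ is violated unless there are at most... I would first pin down exactly which $(S(G),|C|)$ pairs are even possible under the girth-$5$ constraint: a $4$-face gives a $4$-cycle, so in fact $G$ has \emph{no} internal $4$-faces, hence $S(G)$ is the multiset of \emph{all} internal face lengths, and the Euler-formula count $\sum_{f\text{ internal}}(|f|-4) = |C| - 4 + \sum_{v\text{ internal}}(\deg v - 4) \ge |C|-4$ combined with $|C|\le 8$ and each term $|f|-4\ge1$ gives at most $4$ internal faces, each of length $5$ (length $\ge6$ already uses up the whole budget for $|C|=8$ and leaves no room, so a quick case check rules out a single internal $(\ge\!6)$-face together with other faces). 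The cleanest outcome to aim for: the only candidates are $|C|=8$ with either $S(G)=\{5,5\}$ split as two internal $5$-faces sharing an edge and tiling the octagon (which is exactly ``$8$-cycle with a chord''), or degenerate smaller cases that turn out not to be $C$-critical.

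Next I would apply Lemma~\ref{lemma-flowcol} and Lemma~\ref{lemma-cut} to decide criticality. For the claimed graph $G$ (an $8$-cycle $C=c_1\ldots c_8$ with a chord, say $c_1c_6$, creating internal faces of length $5$ and... wait, $c_1c_2c_3c_4c_5c_6$ is a $6$-face and $c_1c_6c_7c_8$ is a $4$-face — so ``$8$-cycle with a chord'' must mean the chord splits it as $5+5$, i.e. the chord joins $c_1$ to $c_5$? No: $c_1\ldots c_5$ is length $4$... I need girth $5$, so the only chord of $C_8$ that creates two faces both of length $\ge5$ is impossible since $4+6$ and $5+5$ require the chord to split $8$ into $4+6$ or $5+5$, and a chord from $c_i$ to $c_j$ creates faces of length $|i-j|+1$ and $8-|i-j|+1$; for both $\ge5$ we need $|i-j|+1\ge5$ and $9-|i-j|\ge5$, i.e. $|i-j|=4$, giving $5+5$). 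So $G=C_8+c_1c_5$, two $5$-faces sharing the chord $c_1c_5$. The verification that this $G$ is $C$-critical has two parts: (i) exhibit a $3$-coloring $\psi$ of $C$ that does not extend — here one shows that for \emph{every} $\psi$-balanced layout $q$ (there are few: $q$ assigns $\pm3$ to each $5$-face), the graph $G^{q,\psi}$ has a small edge-cut separating the terminals, using Lemma~\ref{lemma-cut} with $K_0$ the chord; (ii) show that for every proper subgraph $H$ with $C\subseteq H$, some precoloring extends — for girth-$5$ graphs, deleting any edge or vertex of $G-C$ (i.e. the chord) leaves the quadrangulation-free octagon interior, and one invokes a known extension result (or directly checks via flows that with $S(H)$ now containing an $8$-face the layout $q\equiv0$ works).

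The main obstacle I anticipate is the bookkeeping in part (ii): proving that \emph{nothing smaller} is $C$-critical, i.e. that any $G$ with $|C|\le 8$, girth $\ge5$, and at most one or two internal $5$-faces \emph{not} arranged as the chorded octagon fails to be $C$-critical. The cleanest route is probably contrapositive: assume $G$ is $C$-critical, use the face count above to get $S(G)\subseteq\{5,5\}$ and $|C|=8$, then use Lemma~\ref{lemma-cut} — since $G$ is $C$-critical, for \emph{every} $\psi$-balanced layout (in particular for one chosen so that $G^{q,\psi}$ ``should'' have a large cut) Lemma~\ref{lemma-cut} produces a path $K_0$ or cycle $K_0$ violating the flow inequality; a short cut-counting argument then shows $K_0$ must be a single chord of $C$ splitting it $5+5$, and that $G$ can have no internal vertices (else one could find a proper $C$-critical subgraph, contradicting minimality, or the face count is exceeded). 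I would also need the easy direction of Lemma~\ref{lemma-critin}-type reasoning to handle separating $4$- and $5$-cycles, but girth $\ge5$ kills $4$-cycles outright and a separating $5$-cycle would bound an interior that is itself $5$-cycle-critical, again blocked by the face count. The whole argument is short precisely because girth $5$ makes $4$-faces unavailable, collapsing $S(G)$ onto the tiny set $\{\emptyset,\{5,5\}\}$.
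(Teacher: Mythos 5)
First, note that the paper does not prove this statement at all: Theorem~\ref{thm-thom} is quoted from Thomassen~\cite{thom-torus} and used as a black box (it is what gives $\mathcal{S}_{5,k}=\emptyset$ for $k\le 7$ and $\mathcal{S}_{5,8}=\{\{5,5\}\}$). So your argument has to stand on its own, and as written it has a genuine gap exactly at the step you try to dispose of in one line. Your Euler count is incorrect: the true identity is $\sum_{f\ \mathrm{internal}}(|f|-4)=\sum_{v}(4-\deg v)-|C|-4$, where the sum runs over \emph{all} vertices, and your claimed bound $\sum_{f\ \mathrm{internal}}(|f|-4)\ge |C|-4$ fails already for the chorded octagon itself (the left side is $2$, the right side is $4$). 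More importantly, even a corrected count does not rule out internal vertices: the right-hand side grows with the number of degree-$3$ internal vertices, so a priori a girth-$5$ $C$-critical graph with $|C|\le 8$ could contain internal vertices and several $(\ge\!5)$-faces. Excluding that is precisely the hard content of Thomassen's theorem (and of the finiteness of $\mathcal{S}_{5,k}$), and it needs a real discharging or induction argument, not a one-line face count. All of your subsequent steps --- pinning $S(G)$ down to $\{5,5\}$ with $|C|=8$, forcing $K_0$ to be a chord splitting $C$ as $5+5$, asserting ``$G$ can have no internal vertices (else \dots the face count is exceeded)'', and dismissing the cases $|C|\le 7$ as ``degenerate'' --- lean on this missing bound, so the only-if direction is not established. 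Your use of Lemma~\ref{lemma-flowcol} and Lemma~\ref{lemma-cut} is legitimate (girth $5$ implies triangle-free), but those lemmas control how colorings fail to extend in a graph whose face structure is already known; they cannot substitute for the structural bound.

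A smaller remark: the ``if'' direction needs none of the flow machinery. For $G$ equal to $C_8=c_1\ldots c_8$ plus the chord $c_1c_5$, the only proper subgraph containing $C$ is $C$ itself, and any proper $3$-coloring of $C$ with $\psi(c_1)=\psi(c_5)$ extends to $C$ but not to $G$; also, as you correctly observed, girth $5$ forces the chord to split $C$ into two $5$-cycles. So the easy half is fine (if overengineered), while the substantive half --- that nothing else is $C$-critical, including for $|C|\in\{5,6,7\}$ --- is exactly what remains unproved in your sketch.
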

This implies that $\Ss_{5,k}=\emptyset$ for $k\le 7$ and that $\Ss_{5,8}=\{\{5,5\}\}$.
Dvo\v{r}\'ak et al.~\cite{trfree4}
proved that $\Ss_{4,k}$ is finite for every $k$ (actually, the sum of each multiset in $\Ss_{4,k}$
is bounded by a linear function of $k$).  Furthermore, they proved that $\{k-2\}$ belongs to $\Ss_{4,k}$
and every other element of $\Ss_{4,k}$ has maximum at most $k-3$.

Let $S_1$ and $S_2$ be multisets of integers.  We say that $S_2$ is a \emph{one-step refinement} of $S_1$
if there exist $k\in S_1$ and a set $Z\in \Ss_{4,k}\cup \Ss_{4,k+2}$ such that $S_2=(S_1\setminus \{k\})\cup Z$.
We say that $S_2$ is a \emph{refinement} of $S_1$ if it can be obtained from $S_2$ by a (possibly empty)
sequence of one-step refinements.

\begin{lemma}[Dvo\v{r}\'ak et al.~\cite{trfree4}]\label{lemma-faces}
For every $k\ge 7$, each element of $\Ss_{4,k}$ other than $\{k-2\}$ is a refinement of an element of $\Ss_{4,k-2}\cup \Ss_{5,k}$.
\end{lemma}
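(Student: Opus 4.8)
The plan is to start with a $C$-critical plane triangle-free graph $G$ whose outer cycle $C$ has length $k\ge 7$ and whose face-length multiset $S(G)$ is not $\{k-2\}$, and to produce the desired refinement by cutting $G$ along a suitable cycle. First I would invoke the result of Dvo\v{r}\'ak et al.~\cite{trfree4} already cited in the excerpt: every element of $\Ss_{4,k}$ other than $\{k-2\}$ has maximum at most $k-3$. So every internal face of $G$ has length at most $k-3$; in particular $G$ has girth at least $5$ precisely when $S(G)$ has all entries $\ge 5$, but in general $G$ may have $4$-faces and we are in $\GG_{4,k}$. The key structural tool is Lemma~\ref{lemma-critin}: if we can find inside $G$ a non-facial cycle $C'$ of length $k-2$ or $k$, then the subgraph drawn in the closed disk bounded by $C'$ is $C'$-critical, hence its face multiset lies in $\Ss_{4,k-2}$ or $\Ss_{4,k}$, and the part of $G$ outside $C'$ (with $C'$ now playing the role of an outer-ish cycle) contributes the complementary faces — this is exactly the shape of a one-step refinement. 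The problem then reduces to finding such a separating cycle, or handling the base case where none exists.

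The main step is therefore: show that either $S(G)\in\Ss_{5,k}$ (the girth-$5$ case, where we are already done since $S(G)$ is trivially a refinement of itself as an element of $\Ss_{5,k}$), or $G$ contains a non-facial cycle of length $k-2$ or $k$ that we can cut along. To find such a cycle I would look at the interface between the $4$-faces and the large faces. If $G$ has a $4$-face $f$, consider a shortest cycle $C'$ through some edge of $f$ separating $f$ from the outer face $C$; by the no-small-separating-cycle structure of critical graphs (the length-$(\ge 5)$-face constraints and Euler's formula, as developed in Section~\ref{sec:faces} and in \cite{trfree4}) one argues $|C'|\in\{k-2,k\}$ or else one can iterate. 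More cleanly: induct on the number of $4$-faces. If $G$ has no $4$-face then $G\in\GG_{5,k}$, so $S(G)\in\Ss_{5,k}$ and we are done. If $G$ has a $4$-face, find a non-facial cycle $D$ of length at most $k$ separating that $4$-face from $C$ and with interior properly smaller; cut $G$ along $D$ into an inner part $H_{\mathrm{in}}$ (which is $D$-critical, so $S(H_{\mathrm{in}})\in\Ss_{4,|D|}$ with $|D|\le k$) and an outer part $H_{\mathrm{out}}$ with outer cycle $C$ and one extra $|D|$-face, which is $C$-critical with strictly fewer $4$-faces; apply induction to $H_{\mathrm{out}}$ to write its multiset as a refinement of something in $\Ss_{4,k-2}\cup\Ss_{5,k}$, and observe that replacing the $|D|$-face of $H_{\mathrm{out}}$ by $S(H_{\mathrm{in}})$ is exactly one more one-step refinement, yielding $S(G)$.

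The hard part will be producing the separating cycle $D$ of length $\le k$ with a strictly smaller interior whenever a $4$-face is present and $S(G)\ne\{k-2\}$ — one has to rule out the situation where the only cycles separating a $4$-face from $C$ are $C$ itself (giving nothing new) or have length $>k$. Here I expect to use a discharging/Euler-formula count in the interior of any candidate cycle together with the established bound that all internal faces have length $\le k-3$: a minimal separating cycle $D$ around a $4$-face has $|D|\le k$, and if $|D|=k$ but its interior is all of $G$ we get $S(G)$ with maximum $\le k-3$, contradicting neither hypothesis directly, so one must instead take $D$ to be a shortest non-facial cycle not equal to $C$ and argue, via Lemma~\ref{lemma-critin} applied in the disk and the known enumeration for smaller outer faces (Theorems~\ref{thm-gimbel} and~\ref{thm-thom}, Corollary~\ref{corollary-7cyc}), that $|D|\in\{k-2,k\}$ and that cutting strictly reduces complexity. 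A secondary subtlety is the edge case $k=7,8$ where $\Ss_{5,k}$ is tiny ($\emptyset$ or $\{\{5,5\}\}$), which should be checked against Theorem~\ref{thm-thom} to confirm the base case of the induction is consistent.
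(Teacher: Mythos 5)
First, a point of comparison: the paper does not prove this lemma at all --- it is imported verbatim from the cited work \cite{trfree4}, so there is no in-paper argument to measure your proposal against, and your sketch has to stand on its own. It does not, because its two load-bearing steps are both unjustified. The pivotal one is the claim that after cutting along a non-facial cycle $D$, the outer part $H_{\mathrm{out}}$ (everything of $G$ not drawn strictly inside $D$) is again $C$-critical. Criticality only transfers \emph{inward}: Lemma~\ref{lemma-critin} says the subgraph inside $D$ is $D$-critical, but for an edge $e$ outside $D$ the precoloring of $C$ witnessing its necessity in $G$ may extend to $H_{\mathrm{out}}$ perfectly well, the obstruction living inside $D$. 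This is not a technicality: in a graph of type (c) of Theorem~\ref{thm:8cycleprecise} you may take $D$ to be the $7$-cycle bounding the union of a $5$-face and an adjacent $4$-face; then $H_{\mathrm{out}}$ has face multiset $\{5,7\}$, which by the very facts you quote cannot lie in $\Ss_{4,8}$, so $H_{\mathrm{out}}$ is not $C$-critical and your induction hypothesis does not apply. Nothing in your plan excludes such choices of $D$ or shows that a ``good'' $D$ exists, and replacing $H_{\mathrm{out}}$ by one of its $C$-critical subgraphs destroys the face bookkeeping (faces merge), so the one-step-refinement accounting $S(G)=(S(H_{\mathrm{out}})\setminus\{|D|\})\cup S(H_{\mathrm{in}})$ no longer holds.

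The second gap is the existence of the separating cycle $D$ itself whenever a $4$-face is present; you explicitly defer this to an unspecified discharging/Euler count, so the proposal contains no argument for it. Two further signs that this is not the right route: the definition of one-step refinement allows replacing a face of length $\ell$ by an element of $\Ss_{4,\ell+2}$, i.e.\ an operation that effectively shortens a face by two, and your scheme never produces such a step --- yet that option, and the appearance of $\Ss_{4,k-2}$ (outer face shorter by two) in the statement, indicate that the intended reductions in \cite{trfree4} are of a different kind (identifications/collapses across $4$-faces and cuts along paths, not only cuts along cycles). Minor additional care is also needed in your base case: ``no $4$-face implies girth $5$'' requires the observation that in a $C$-critical triangle-free plane graph every $4$-cycle bounds a face (via Lemma~\ref{lemma-critin} and Theorem~\ref{thm-gimbel}), which you should state rather than assume.
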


In particular, together with Theorems~\ref{thm-gimbel} and \ref{thm-thom} this implies that $\Ss_{4,k}=\emptyset$ for $k\le 5$,
$\Ss_{4,6}=\{\emptyset\}$, $\Ss_{4,7}\subseteq\{\{5\}\}$ and $\Ss_{4,8}\subseteq\{\emptyset, \{5,5\}, \{6\}\}$.

%%%%%%%%%%%%%%%%%%%%%%%%%%%%%%%%%%%%%%%%%%%%%%%%%%%%%%%%
%%%%%%%%%%%%%%%%%%%%%%%%%%%%%%%%%%%%%%%%%%%%%%%%%%%%%%%%
%%%%%%%%%%%%%%%%%%%%%%%%%%%%%%%%%%%%%%%%%%%%%%%%%%%%%%%%
%%%%%%%%%%%%%%%%%%%%%%%%%%%%%%%%%%%%%%%%%%%%%%%%%%%%%%%%
%%%%%%%%%%%%%%%%%%%%%%%%%%%%%%%%%%%%%%%%%%%%%%%%%%%%%%%%
\section{Quadrangulations and graphs with a $(k-2)$-face}\label{sec:k-2}

As a first application of Lemma~\ref{lemma-flowcol}, let us consider quadrangulations.

\begin{theorem}\label{thm:only4}
Let $G$ be a connected triangle-free plane graph with outer face bounded by a cycle $C$ of length $k\ge 6$.
Suppose that all internal faces of $G$ have length $4$.  The graph $G$ is $C$-critical if and only if
$G$ contains no separating $4$-cycles.
\end{theorem}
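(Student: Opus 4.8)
The plan is to use the flow reformulation of Lemma~\ref{lemma-flowcol} together with the structure of small cuts from Lemma~\ref{lemma-cut}. Since all internal faces have length $4$, the only layout is $q\equiv 0$, and a layout is $\psi$-balanced if and only if the number $n_s$ of source edges of $C$ equals the number $n_t$ of sink edges. So for a fixed precoloring $\psi$ of $C$, exactly one of two things happens: either $n_s\neq n_t$, in which case no $\psi$-balanced layout exists and $\psi$ does not extend; or $n_s=n_t$, the zero layout is balanced, and $\psi$ extends iff the terminals of $G^{q,\psi}$ are not separated by an edge-cut smaller than $c(q,\psi)=n_s$. I would first note that the parity/counting argument shows $n_s=n_t$ depends only on $\psi$, and in fact since going around $C$ the color changes must sum to $0$ in $\mathbb{Z}_3$, one can check $n_s\equiv n_t\pmod 3$ always; whether they are actually equal depends on $\psi$, and for each such $\psi$ the coloring extends iff the min-cut condition holds.

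Next I would prove the two directions. For the ``if'' direction, suppose $G$ has no separating $4$-cycle but $G$ is not $C$-critical; then some proper subgraph $H$ with $C\subseteq H$ already fails to extend some $\psi$ that does not extend to $G$ — wait, more carefully: $G$ fails to be $C$-critical means either $G$ is $3$-colorable-extendable for every $\psi$ (so $G$ itself imposes no new constraint and we must exhibit a proper subgraph already blocking some $\psi$), or there is a $\psi$ not extending to $G$ but also not extending to some proper $C$-subgraph $H$. The cleaner route: show directly that if $G$ has no separating $4$-cycle then $G$ \emph{is} $C$-critical, by showing (i) there exists $\psi$ not extending to $G$, and (ii) for every edge $e\in E(G)\setminus E(C)$, the graph $G-e$ is such that some $\psi'$ not extending to $G$ does extend to $G-e$. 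For (ii), removing an edge $e$ from $G$ removes the corresponding dual edge $e^\star$ from $G^{q,\psi}$, which can only decrease cuts; I want to find $\psi$ whose unique blocking cut uses $e^\star$. Using Lemma~\ref{lemma-cut}, a blocking cut corresponds to a path $K_0$ in $G$ between two vertices of $C$, or a cycle $K_0$ meeting $C$ in at most one vertex, with $|n_s+m-n_t|>|K_0|$ (resp. $|m|>|K_0|$); since $m=0$ here, the path case needs $|n_s-n_t|>|K_0|$ with $K_0+P$ a cycle, and the cycle case needs $0>|K_0|$, impossible. So the only obstructions are paths $K_0$ with both ends on $C$, dividing the disk, such that the two boundary cycles $P_1+K_0$ and $P_2+K_0$ have the ``color shift'' across $P_i$ exceeding $|K_0|$. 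A short $K_0$ gives a separating short cycle unless $K_0$'s endpoints are close on $C$; one checks that a separating $4$-cycle is exactly the obstruction that would let $G-e$ still fail, so its absence yields criticality.

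For the ``only if'' direction, suppose $G$ is $C$-critical and all internal faces are $4$-faces, but $G$ has a separating $4$-cycle $D$. Let $G_1$ be the part inside $D$ (with $D$ as outer face) and note $D$ bounds no face, so $G_1\supsetneq D$; since $G$ is $C$-critical, Lemma~\ref{lemma-critin} (applied with $K=C$, the cycle $D$, and $H=$ the subgraph in the disk bounded by $C$... actually here $D$ is interior) — more directly: by Theorem~\ref{thm-gimbel} (the $4$-cycle case, i.e.\ $k=4\le 6$), $G_1$ is $D$-critical iff $D$ is a $6$-cycle, so for $|D|=4$ the graph $G_1$ is \emph{not} $D$-critical, meaning every $3$-coloring of $D$ extends into $G_1$. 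Then replacing $G_1$ by just the $4$-cycle $D$ (or by a quadrangulation of the disk with no interior obstruction) gives a proper subgraph $H\subsetneq G$ with $C\subseteq H$ such that any $\psi$ extendable to $H$ is extendable to $G$ (extend through $D$ into $G_1$), contradicting $C$-criticality. The same argument kills separating $4$-cycles that pass through a vertex of $C$. Thus no separating $4$-cycle exists.

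The main obstacle I expect is the ``if'' direction: translating ``no separating $4$-cycle'' into the statement that \emph{no} choice of $\psi$ and blocking path $K_0$ survives edge-deletion, i.e.\ that for every non-$C$ edge $e$ there is a $\psi$ blocked by $G$ whose obstruction is destroyed by removing $e$. Concretely, I need to produce, for each such $e$, a min-cut in $G^{q,\psi}$ of size exactly $c(q,\psi)$ using $e^\star$, with no smaller cut; controlling the ``no smaller cut'' part is where the absence of separating short cycles (for $4$-cycles, equivalently the $6$-cycle-interior condition trivially holds, and there are no $(\ge 5)$-faces) must be used, and I anticipate this requires a careful but elementary case analysis of the possible $K_0$ shapes and lengths, plus an explicit construction of $\psi$ as a ``shifted'' coloring around $C$ — exactly the kind of coloring appearing in the last sentence of Theorem~\ref{thm-gimbel}.
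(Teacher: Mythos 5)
Your ``only if'' direction is essentially the paper's argument (a separating $4$-cycle would, via Lemma~\ref{lemma-critin}, force the subgraph inside it to be critical with respect to a $4$-cycle, which Theorem~\ref{thm-gimbel} forbids), and it is fine up to a small point: ``$G_1$ is not $D$-critical'' does not by itself say that \emph{every} $3$-coloring of $D$ extends into the interior; you need the standard remark that a non-extendable precoloring of $D$ would yield a $D$-critical subgraph inside $D$ (or quote Aksenov's result for faces of length at most $5$). That is easily repaired.

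The ``if'' direction, however, has a genuine gap, and it is the heart of the theorem. Your plan is: for each edge $e\notin E(C)$, find a precoloring $\psi$ that is blocked in $G$ by a cut through $e^\star$ but by no smaller cut, via a case analysis of the possible paths $K_0$ from Lemma~\ref{lemma-cut}. You never construct such a $\psi$, and you say yourself that this is the main obstacle; as stated, the proposal proves nothing in this direction. Moreover, the plan aims at the harder target: since all internal faces of $G$ are $4$-faces, the \emph{only} layout is $q\equiv 0$, so any precoloring with $n_s\neq n_t$ is blocked in $G$ automatically, with no cut analysis at all. This is exactly the paper's route, and it is the idea your proposal misses: given a proper subgraph $G_0\supseteq C$, it has a face $f$ of length at least $6$ (here ``no separating $4$-cycles'' enters, together with the fact that deleting material from a quadrangulation merges $4$-faces); let $G_1$ be everything outside the interior of $f$. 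Because $G$ is bipartite it has no separating $(\le\!5)$-cycles, so the dual of $G_1$ has no edge-cut of size at most $5$ between $f$ and $C$, hence contains $6$ edge-disjoint paths from $f$ to $C$; orienting these towards $C$ and orienting the remaining (even-degree) dual edges along cycles gives a nowhere-zero $\mathbb Z_3$-flow, i.e.\ a $3$-coloring of $G_1$ whose restriction $\psi$ to $C$ has $k/2+3$ sink and $k/2-3$ source edges. This $\psi$ extends to $G_0\subseteq G_1$ but is unbalanced, hence does not extend to $G$ by Lemma~\ref{lemma-flowcol}. Without this (or some equally concrete) construction of an unbalanced extendable-to-$G_0$ precoloring, your ``careful but elementary case analysis'' of balanced colorings and blocking paths $K_0$ remains a hope rather than a proof, and controlling ``no smaller cut'' per edge $e$ is precisely the part you have not shown how to do.
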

\begin{proof}
If $G$ is $C$-critical, then it does not contain separating $4$-cycles by Lemma~\ref{lemma-critin}
and Theorem~\ref{thm-gimbel}.

Suppose now conversely that $G$ does not contain separating $4$-cycles.  Let $G_0$ be a proper subgraph of $G$ containing $C$.
Since $G$ does not contain separating $4$-cycles, it follows that $G_0$ has a face $f$ of length at least $6$. 
Let $G_1\supseteq G_0$ be the subgraph of $G$ consisting of all vertices and edges not drawn in the interior of $f$.
As $G$ is bipartite, it does not contain separating $(\le\!5)$-cycles, and thus the dual $G^\star_1$ of $G_1$ does
not contain an edge-cut of size at most $5$ separating $C$ from $f$.  Therefore, there exist $6$ pairwise edge-disjoint
paths $P_1$, \ldots, $P_6$ from $f$ to $C$ in $G^\star_1$.  Note that all vertices of $G^\star_1-E(P_1\cup\ldots\cup P_6)$
have even degree, and thus $G^\star_1-E(P_1\cup\ldots\cup P_6)$ is a union of pairwise edge-disjoint cycles.
We orient the edges of each of the cycles in one (arbitrary) direction and direct the edges of $P_1$, \ldots, $P_6$ towards $C$.
This orientation of $G^\star_1$ corresponds to a nowhere-zero $\mathbb Z_3$-flow, giving a $3$-coloring $\varphi$ of $G_1$.
Let $\psi$ be the restriction of $\varphi$ to $C$, and observe that $C$ contains $k/2+3$ sink edges and $k/2-3$ source
edges with respect to this coloring.

Note that $G$ has only one layout (assigning $0$ to every internal face), and this layout is not $\psi$-balanced.
By Lemma~\ref{lemma-flowcol}, $\psi$ does not extend to a $3$-coloring of $G$.  On the other hand, $\psi$ extends
to a $3$-coloring of $G_0$, since $G_0$ is a subgraph of $G_1$.  We conclude that for every proper subgraph of $G$ containing $C$,
there exists a precoloring of $C$ that extends to this subgraph but not to $G$, and thus $G$ is $C$-critical.
\end{proof}

Next, we deal with the graphs with an internal face of length $k-2$.
Let $r(k)=0$ if $k\equiv 0\pmod{3}$, $r(k)=2$ if $k\equiv 1\pmod{3}$ and $r(k)=1$ if $k\equiv 2\pmod{3}$.

\begin{theorem}\label{thm:n-2}
Let $G$ be a connected triangle-free plane graph with outer face bounded by a cycle $C$ of length $k\ge 7$.
Suppose that $f$ is an internal face of $G$ of length $k-2$ and that all other internal faces
of $G$ have length $4$.  The graph $G$ is $C$-critical if and only if
\begin{itemize}
\item[(a)] $f\cap C$ is a path of length at least $r(k)$ (possibly empty if $r(k)=0$),
\item[(b)] $G$ contains no separating $4$-cycles, and
\item[(c)] for every $(\le\!k-1)$-cycle $K\neq f$ in $G$, the interior of $K$ does not contain $f$.
\end{itemize}
Furthermore, in a graph satisfying these conditions, a precoloring $\psi$ of $C$ extends to a $3$-coloring of
$G$ if and only if $E(C)\setminus E(f)$ contains both a source edge and a sink edge with respect to $\psi$.
\end{theorem}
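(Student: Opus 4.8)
The plan is to work entirely through the flow reformulation of Section~\ref{sec:flows}. Since every internal face of $G$ other than $f$ is a $4$-face, a layout of $G$ is completely determined by the single value $q(f)$, which ranges over the multiples of $3$ of the same parity as $k$ with $|q(f)|\le k-2$; and for a precoloring $\psi$ of $C$ one has $n_s+n_t=k$ with $n_t-n_s$ a multiple of $3$ of parity $k$. First I would record the consequences: a $\psi$-balanced layout of $G$ exists if and only if $C$ has both a source and a sink edge, and it is then unique, given by $q(f)=n_t-n_s$, with $c(q,\psi)=\max(n_s,n_t)$. I would also fix notation (assuming, as we may, that $G$ is $2$-connected): $\partial f=P\cup R$, where $P=f\cap C$ is a path of length $\ell\le k-3$ and $R$ is a path of length $k-2-\ell$ with the same ends as $P$ and no internal vertex on $C$; and $Q:=C-E(P)$, so that $E(Q)=E(C)\setminus E(f)$ and $|Q|=k-\ell\ge 3$.

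Next I would prove the ``furthermore'' using Lemmas~\ref{lemma-flowcol} and~\ref{lemma-cut}. If $E(C)$ is monochromatic then there is no $\psi$-balanced layout and $\psi$ does not extend; moreover $E(Q)$ is then monochromatic too. If $E(C)$ is not monochromatic but $E(Q)$ is, say all sink, then with $q$ the unique balanced layout the vertex $s$ of $G^{q,\psi}$ is joined only to $f$, and cutting all edges incident with $f$ except those to $s$ --- the $|R|$ dual edges of $R$ together with the $f$--$t$ edges --- separates $s$ from $t$ using fewer than $c(q,\psi)$ edges (because $|R|<|Q|$), so $\psi$ does not extend. Finally, if $E(Q)$ has both a source and a sink edge, then $q$ exists and I claim there is no small cut: by Lemma~\ref{lemma-cut} a small cut gives a subgraph $K_0$, and if $K_0$ is a cycle then the sum $m$ of $q$-values it encloses is $0$ unless $f$ is inside, in which case $K_0\ne\partial f$ (else $|m|=|q(f)|\le k-2=|K_0|$) is a $(\le k-1)$-cycle enclosing $f$, contradicting~(c); while if $K_0$ is a path with ends $x,y\in C$, let $A_1,A_2$ be the two $x$--$y$ arcs of $C$ with $f$ in the disc bounded by $K_0\cup A_1$; then $A_2\subseteq Q$, only $4$-faces lie inside $K_0\cup A_2$, and Lemma~\ref{lemma-cut}(a) gives $|n_s(A_2)-n_t(A_2)|>|K_0|$, whence $|K_0|<|A_2|$ and $|K_0\cup A_1|=|K_0|+k-|A_2|<k$, so~(c) forces $K_0\cup A_1=\partial f$, i.e. $K_0=R$, $A_2=Q$, and then $|n_s(Q)-n_t(Q)|>|R|=|Q|-2$ forces $|n_s(Q)-n_t(Q)|=|Q|$ by parity, i.e. $E(Q)$ monochromatic --- a contradiction. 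Hence $\psi$ extends exactly when $E(C)\setminus E(f)$ has both a source and a sink edge; I expect this to be the cleanest part of the proof.

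I would then read off the criticality statement. A short parity count --- colour $Q$ so that all its edges have one type and try to extend over the path $P$ of length $\ell$ joining its ends --- shows that a precoloring of $C$ with $E(C)\setminus E(f)$ monochromatic exists if and only if $\ell\ge r(k)$, i.e. if and only if~(a). If $G$ is $C$-critical then~(b) holds, since otherwise deleting the interior of a separating $4$-cycle $K$ yields a proper subgraph whose colourings all re-extend across $K$ (Aksenov~\cite{aksenov}); ~(c) holds, since otherwise Lemma~\ref{lemma-critin} would produce, inside some $(\le k-1)$-cycle $K\ne\partial f$ enclosing $f$, a $K$-critical graph containing the $(k-2)$-face, forcing a multiset in $\Ss_{4,|K|}$ of maximum $k-2>|K|-2$, which is impossible by Lemma~\ref{lemma-faces} and the bounds following it; and~(a) holds, since otherwise the ``furthermore'' (which used only~(b) and~(c)) shows every precoloring of $C$ extends. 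Conversely, if~(a),(b),(c) hold then --- as a precoloring witnessing criticality for $G-e$ also works for every subgraph of $G-e$ --- it suffices to find, for each $e\in E(G)\setminus E(C)$, a precoloring extending to $G-e$ but not to $G$; I would take $\psi$ with $E(C)\setminus E(f)$ monochromatic. Deleting $e$ merges the two internal faces at $e$ into a face of length $\ge 6$. If $e\in\partial f$ this face $f'$ has length $k$ and contains $f$; taking the layout $q'(f')=n_t-n_s$ of $G-e$, any small cut in $(G-e)^{q',\psi}$ would yield (Lemma~\ref{lemma-cut}) a cycle of length $<k$ enclosing $f'$, hence enclosing $f$ in $G$ and distinct from $\partial f$ (it avoids $e$), contradicting~(c); so $\psi$ extends to $G-e$.

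The step I expect to be the main obstacle is the remaining sufficiency case, where $e$ lies between two $4$-faces so that the merged face $g$ has length only $6$: there one must use a layout of $G-e$ assigning $\pm 6$ to $g$ --- together with the freedom in the choice of the monochromatic precoloring $\psi$ --- to ensure, via Lemmas~\ref{lemma-flowcol} and~\ref{lemma-cut}, that no small cut survives in $(G-e)^{q',\psi}$; this needs a more careful analysis of how the $6$-face absorbs and reroutes the flow that the obstructing cut of $G^{q,\psi}$ exploited. Everything else is short once the layout bookkeeping and the arc-length arithmetic above are in place.
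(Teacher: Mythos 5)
Your treatment of the ``furthermore'' statement and of the necessity of (a)--(c) is essentially the paper's own argument: the unique $\psi$-balanced layout $q(f)=n_t-n_s$, the explicit cut of size $c(q,\psi)-2$ around $f$ when $E(C)\setminus E(f)$ is monochromatic, and the use of Lemma~\ref{lemma-cut} to force $K_0+A_1=\partial f$ (hence $f\cap C$ a path and $E(C)\setminus E(f)$ monochromatic) all coincide with the paper, and your parity count for when a precoloring with $E(C)\setminus E(f)$ monochromatic exists correctly reproduces $r(k)$. Two small points to tighten: in your sufficiency case $e\in E(f)\setminus E(C)$ you only mention the cycle outcome of Lemma~\ref{lemma-cut}, but the path outcome must also be converted, via the arc not containing the merged face, into a $(\le\!k-1)$-cycle enclosing $f$ (the same trick you use earlier, so this is routine); and your notational setup silently assumes $f\cap C$ is a path, which in the necessity direction is itself something the cut argument delivers rather than a standing hypothesis.

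The genuine gap is exactly the case you flag and postpone: sufficiency when $e$ lies on two $4$-faces, so that $G-e$ has a new $6$-face $g$. Your reduction of criticality to the maximal subgraphs $G-e$ is legitimate, but for such $e$ the natural layout ($q'(f)=n_t-n_s$, $q'(g)=0$) still admits the same obstructing cut of size $c-2$ around $f$, so one is forced to take $q'(g)=\pm 6$ and to use the freedom in the coloring of $f\cap C$; one must then verify that $|n_t-n_s\mp 6|\le k-2$ can be arranged and rule out all outcomes of Lemma~\ref{lemma-cut} (cycles enclosing $f$, $g$, or both, plus the several path configurations) --- a case analysis comparable in size to everything you have written, and not carried out in your proposal. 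The paper avoids this entirely by a different device: having exhibited one non-extending precoloring, it passes to a $C$-critical subgraph $G_0\subseteq G$, observes that by (c) $G_0$ has an internal face of length at least $k-2$, invokes Lemma~\ref{lemma-faces} (the only element of $\Ss_{4,k}$ with maximum at least $k-2$ is $\{k-2\}$) to conclude that all other faces of $G_0$ are quadrilaterals, and then uses (b) to force $G_0=G$. Until you either complete the $6$-face cut analysis or import this critical-subgraph argument based on Lemma~\ref{lemma-faces}, the ``if'' direction of the theorem remains unproved in your write-up.
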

\begin{proof}
If $G$ is $C$-critical, then it does not contain separating $4$-cycles by Lemma~\ref{lemma-critin}
and Theorem~\ref{thm-gimbel}.  Furthermore, by Lemma~\ref{lemma-faces}, if $m\le k-1$, then no element of $\Ss_{4,m}$
contains $k-2$, and thus Lemma~\ref{lemma-critin} implies that if $G$ is $C$-critical, then it satisfies (c).
Assume from now on that $G$ satisfies (b) and (c).

Suppose that $\psi$ is a $3$-coloring of $C$ that does not extend to a $3$-coloring of $G$.
Let $d$ be the difference between the number of sink and source edges of $C$ with respect to $\psi$.
If $|d|>k-2$, then since $k$ and $d$ have the same parity, it follows that $|d|=k$.  In this case $k\equiv 0\pmod{3}$
and $r(k)=0$, and thus (a) is trivially true.  Furthermore, either all edges of $C$ are source or all of them are sink.

If $d\le k-2$, then let $q$ be the layout for $G$ such that $q(f)=d$.  Note that $q$ is $\psi$-balanced.
Let $K_0$ be the subgraph of $G$ obtained by Lemma~\ref{lemma-cut}.
As $G$ satisfies (c), $K_0$ is not a cycle.  Therefore, $K_0$ is a path joining two distinct vertices of $C$.
Let $P$ be the subpath of $C$ joining the endvertices of $K_0$ such that the open disk bounded by
$P+K_0$ does not contain $f$, let $n_s$ be the number of source edges in $P$ and $n_t$ the number of sink edges in $P$.
By Lemma~\ref{lemma-cut}, we have $|P|>|K_0|$.  Let $R$ be the path $C-P$
and consider the cycle $Z=R+K_0$.  Note that $|Z|=k+|K_0|-|P|<k$.  By (c), we conclude that $Z$ is the boundary of $f$.
As $|f|=k-2$, it follows that $|K_0|=|P|-2$, and since $n_s+n_t=|P|$ and $|n_s-n_t|>|K_0|$, it follows that $n_s=0$ or $n_t=0$,
i.e., either all edges of $E(C)\setminus E(f)$ are source or all of them are sink.

Let $m_s$ be the number of source edges of $C$ and $m_t$ the number of sink edges of $C$. Note that $m_t-m_s=d\equiv 0\pmod3$,
and thus $m_s+m_t\equiv 2m_s\equiv 2m_t\pmod3$.  It follows that $m_s\equiv m_t\equiv r(k)\pmod3$.  Since $R=f\cap C$ contains
either all source edges or all sink edges of $C$, it follows that $f\cap C$ is a path of length at least $r(k)$.

Therefore, if $G$ is $C$-critical, then it satisfies (a).  Furthermore, if $G$ is a graph satisfying (b) and (c),
then every precoloring of $C$ such that $E(C)\setminus E(f)$ contains both a source edge and a sink edge
extends to a $3$-coloring of $G$.

Suppose now that $G$ satisfies (a), (b) and (c).  Let $\psi$ be a $3$-coloring of $C$ such that all edges of
$E(C)\setminus E(f)$ are source (such a coloring exists by (a)).  Let $q$ be the unique $\psi$-balanced layout for $G$
and let $K$ consist of $E(f)\setminus E(C)$ and of all edges of $G^{q,\psi}$ joining $f$ with $s$.
Note that $|E(C)\setminus E(f)|=|E(f)\setminus E(C)|+2$, and thus $K$ is an edge-cut separating the terminals in $G^{q,\psi}$
of size $c(q,\psi)-2$.  By Lemma~\ref{lemma-flowcol}, $\psi$ does not extend to a $3$-coloring of $G$.

It follows that $G$ contains a $C$-critical subgraph $G_0$.  By (c), $G_0$ has an internal face of length at least $k-2$.
By Lemma~\ref{lemma-faces}, the only element of $\Ss_{4,k}$ whose maximum is at least $k-2$ is $\{k-2\}$,
and thus all other internal faces of $G_0$ have length four.  By (b), it follows that $G=G_0$, and thus $G$ is $C$-critical.
\end{proof}

\begin{figure}
\center{\includegraphics{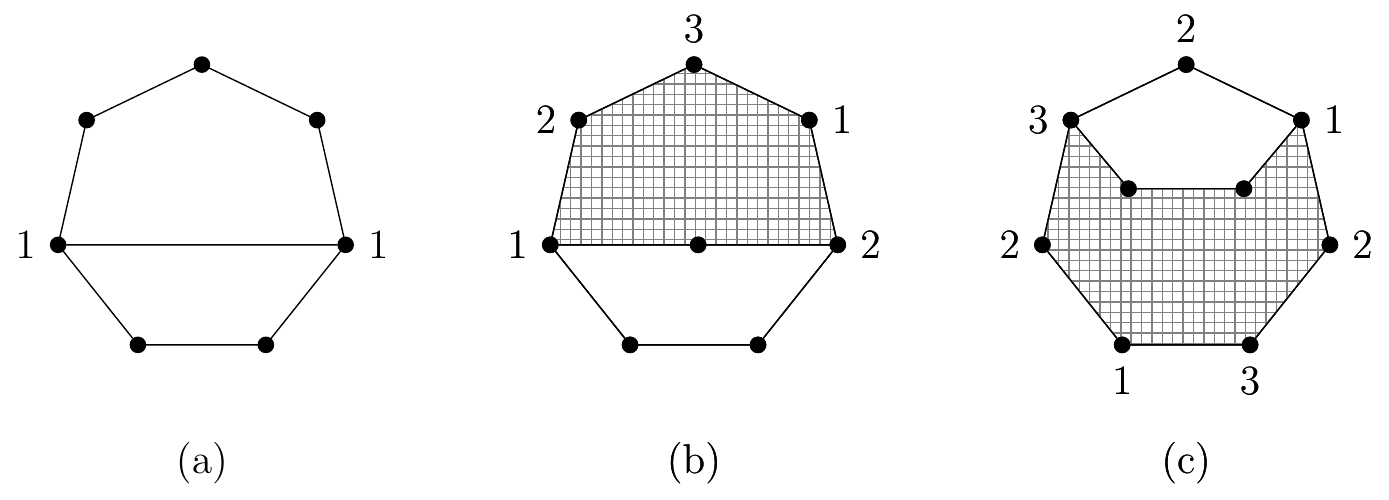}}
\caption{Critical graphs with a precolored $7$-face.}\label{fig-7cyc}
\end{figure}

Since $\Ss_{4,7}\subseteq \{\{5\}\}$, Theorem~\ref{thm:n-2} fully characterizes plane triangle-free graphs critical with
respect to a precolored $7$-face.  The following corollary additionally discusses the three possible cases of the intersection of the $5$-face with
the $7$-face in such a graph.

\begin{corollary}\label{corollary-7cyc}
Let $G$ be a plane triangle-free graph with outer face bounded by a cycle $C=c_1\ldots c_7$ of length $7$.  The graph $G$ is $C$-critical
and $\psi$ is a $3$-coloring of $C$ that does not extend to a $3$-coloring of $G$ if and only if $G$ contains no separating cycles of length at most five
and one of the following propositions is satisfied up to relabelling of vertices (see Figure~\ref{fig-7cyc} for an illustration).
\begin{itemize}
\item[\textrm(a)] The graph $G$ consists of $C$ and the edge $c_1c_5$, and $\psi(c_1)=\psi(c_5)$.
\item[\textrm(b)] The graph $G$ contains a vertex $v$ adjacent to $c_1$ and $c_4$, the cycle $c_1c_2c_3c_4v$ bounds a 5-face and every face drawn inside
the $6$-cycle $vc_4c_5c_6c_7c_1$ has length four; furthermore, $\psi(c_4)=\psi(c_7)$ and $\psi(c_5)=\psi(c_1)$.
\item[\textrm(c)] The graph $G$ contains a path $c_1uvc_3$ with $u,v\not\in V(C)$, the cycle $c_1c_2c_3vu$ bounds a 5-face and every face drawn inside
the $8$-cycle $uvc_3c_4c_5c_6c_7c_1$ has length four; furthermore, $\psi(c_3)=\psi(c_6)$, $\psi(c_2)=\psi(c_4)=\psi(c_7)$
and $\psi(c_1)=\psi(c_5)$.
\end{itemize}
\end{corollary}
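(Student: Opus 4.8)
The plan is to deduce Corollary~\ref{corollary-7cyc} from Theorem~\ref{thm:n-2} applied with $k=7$, after first pinning down the face structure of such a $G$ and then rewriting the conditions of that theorem in the three combinatorial forms (a)--(c).

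First I would note that if $G$ is $C$-critical with $C$ a $7$-cycle, then $S(G)\in\Ss_{4,7}\subseteq\{\{5\}\}$ by Lemma~\ref{lemma-faces} together with Theorems~\ref{thm-gimbel} and~\ref{thm-thom}, so $S(G)=\{5\}$; hence $G$ has a unique internal $(\ge\!5)$-face $f$, of length $5$, and every other internal face has length $4$ ($G$ being simple and triangle-free). Theorem~\ref{thm:n-2} now applies with this $f$ and with $r(7)=2$: $G$ is $C$-critical if and only if (i) $f\cap C$ is a path of length at least $2$, (ii) $G$ has no separating $4$-cycle, and (iii) no $(\le\!6)$-cycle $K\ne f$ has $f$ in its interior; moreover a $3$-coloring $\psi$ of $C$ extends to $G$ if and only if $E(C)\setminus E(f)$ contains both a source edge and a sink edge with respect to $\psi$. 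Everything else is a matter of repackaging.

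The next step is to show that, given this face structure, ``(ii) and (iii)'' is equivalent to ``$G$ has no separating cycle of length at most $5$''. This is a parity count: a cycle of length $\ell\le 6$ bounds a disk $D$ containing some number $j\ge 0$ of internal $4$-faces and possibly also $f$, and counting edge--face incidences gives $2|E(D)|=4j+\ell$ when $f\notin D$ and $2|E(D)|=4j+5+\ell$ when $f\in D$; so $f\notin D$ forces $\ell$ even (hence $\ell=4$) and $f\in D$ forces $\ell$ odd (hence $\ell=5$). Thus no $6$-cycle can enclose $f$, so (iii) is automatic for $6$-cycles; a separating $5$-cycle is necessarily non-facial and, by the count, encloses $f$, so it is forbidden by (iii); and a separating $4$-cycle is forbidden by (ii). The converse direction is immediate. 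I would then distinguish cases by $\ell:=|E(f\cap C)|$, which lies in $\{2,3,4\}$ (at least $2$ by (i), at most $4$ since $f\cap C$ is a subpath of the $5$-cycle $f$). If $\ell=4$, all five vertices of $f$ are on $C$ and the remaining edge of $f$ is a chord of $C$ joining two vertices at distance $3$; it splits the interior of $C$ into $f$ and the interior of a $4$-cycle, which by (ii) must be a single face, so $G=C+\text{chord}$: case~(a). If $\ell=3$, then $f=c_1c_2c_3c_4v$ with $v\notin V(C)$ and $vc_1,vc_4\in E(G)$, and all of $G$ outside $f$ lies in the disk bounded by the $6$-cycle $vc_4c_5c_6c_7c_1$, whose internal faces are all $4$-faces since $f$ is the only $(\ge\!5)$-face: case~(b). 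If $\ell=2$, then $f=c_1c_2c_3vu$ with $u,v\notin V(C)$ and $c_1u,uv,vc_3\in E(G)$, and the part of $G$ outside $f$ lies in the disk bounded by the $8$-cycle $uvc_3c_4c_5c_6c_7c_1$, again with all internal faces of length $4$: case~(c). Conversely, each of these structures together with ``no separating cycle of length at most $5$'' yields (i)--(iii) and hence a $C$-critical graph; the labellings chosen use only the dihedral symmetry of $C$, which is the meaning of ``up to relabelling''.

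Finally I would translate the extension criterion into the stated colour equalities. From the construction in Section~\ref{sec:flows}, with $C$ given a fixed cyclic orientation there is a fixed $\epsilon\in\{1,-1\}$ such that an edge $c_ic_{i+1}$ of $C$ is a sink edge exactly when $\psi(c_i)-\psi(c_{i+1})\equiv\epsilon\pmod 3$; hence along any subpath of $C$ the number of sink edges minus the number of source edges is congruent modulo $3$ to the difference of the colours of its endpoints, so the endpoints of a subpath receive equal colours if and only if its sink and source edges are equally many modulo $3$. Write $P=E(C)\setminus E(f)$, a path; by Theorem~\ref{thm:n-2}, $\psi$ fails to extend to $G$ exactly when all edges of $P$ have the same type (all sink or all source). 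For $\ell=4$, $P=c_5c_6c_7c_1$ has length $3$, and three edges are equally often sink and source modulo $3$ only when they agree in type, so this is exactly $\psi(c_1)=\psi(c_5)$: case~(a). For $\ell=3$, $P=c_4c_5c_6c_7c_1$ has length $4$, and $P$ is monochromatic if and only if its two length-$3$ subpaths $c_4c_5c_6c_7$ and $c_5c_6c_7c_1$ are both monochromatic, i.e.\ $\psi(c_4)=\psi(c_7)$ and $\psi(c_5)=\psi(c_1)$: case~(b). For $\ell=2$, $P=c_3c_4c_5c_6c_7c_1$ has length $5$, and chaining the overlapping length-$3$ subpaths $c_3c_4c_5c_6$, $c_4c_5c_6c_7$, $c_5c_6c_7c_1$ shows that $P$ is monochromatic if and only if $\psi(c_3)=\psi(c_6)$, $\psi(c_4)=\psi(c_7)$ and $\psi(c_1)=\psi(c_5)$; and when $P$ is, say, all source, the edges $c_1c_2,c_2c_3\in E(f)$ together with properness of $\psi$ on the odd cycle $C$ force in addition $\psi(c_2)=\psi(c_4)$, so the condition is precisely $\psi(c_3)=\psi(c_6)$, $\psi(c_2)=\psi(c_4)=\psi(c_7)$ and $\psi(c_1)=\psi(c_5)$: case~(c). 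The step I expect to be the main obstacle is this final translation: fixing the sink/source orientation convention correctly and verifying in case~(c) that the three displayed equalities are exactly equivalent to ``$\psi$ does not extend'', the delicate point being that $\psi(c_2)=\psi(c_4)$ is a genuine consequence of $P$ being monochromatic (via the parity of the odd cycle $C$), so that listing it does not over-restrict the characterization.
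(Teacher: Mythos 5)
Your proposal is correct and follows exactly the route the paper intends (the paper states the corollary without detailed proof, as a consequence of Theorem~\ref{thm:n-2} with $k=7$, $r(7)=2$ and $\Ss_{4,7}\subseteq\{\{5\}\}$, plus the case analysis of how the $5$-face meets $C$): you apply the same theorem, split according to $|E(f\cap C)|\in\{4,3,2\}$ to get cases (a)--(c), and translate the source/sink criterion into the stated colour equalities, correctly observing that $\psi(c_2)=\psi(c_4)$ in case (c) is forced by properness of $\psi$ on the $7$-cycle. The details you supply (the parity argument identifying ``no separating $(\le\!5)$-cycle'' with conditions (b) and (c) of Theorem~\ref{thm:n-2}, and the mod-$3$ telescoping along subpaths of $C$) are sound and match what the paper leaves implicit.
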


%%%%%%%%%%%%%%%%%%%%%%%%%%%%%%%%%%%%%%%%%%%%%%%%%%%%%%%%
%%%%%%%%%%%%%%%%%%%%%%%%%%%%%%%%%%%%%%%%%%%%%%%%%%%%%%%%
%%%%%%%%%%%%%%%%%%%%%%%%%%%%%%%%%%%%%%%%%%%%%%%%%%%%%%%%
%%%%%%%%%%%%%%%%%%%%%%%%%%%%%%%%%%%%%%%%%%%%%%%%%%%%%%%%
%%%%%%%%%%%%%%%%%%%%%%%%%%%%%%%%%%%%%%%%%%%%%%%%%%%%%%%%
\section{Graphs with precolored $8$-face}\label{sec:8face}

Finally, we consider the plane triangle-free graphs critical with respect to a precolored face of length $8$.

\begin{proof}[Proof of Theorem~\ref{thm:8cycleprecise}]
Suppose that $G$ is $C$-critical.  By Lemma~\ref{lemma-critin} and Theorem~\ref{thm-gimbel}, $G$ does not
contain any separating cycles of length at most five and the interior of every non-facial 6-cycle
contains only faces of length four.  By Lemma~\ref{lemma-faces}, we have $S(G)=\emptyset$ or
$S(G)=\{6\}$ or $S(G)=\{5,5\}$.  If $S(G)=\emptyset$, then $G$ satisfies (a).  If $S(G)=\{6\}$, then
$G$ satisfies (b) by Theorem~\ref{thm:n-2}.  Therefore, suppose that $S(G)=\{5,5\}$, and let $f_1$ and $f_2$ be
the $5$-faces of $G$.

Let $\psi$ be a $3$-coloring of $C$ by colors $\{1,2,3\}$ that does not extend to a $3$-coloring of $G$.
By symmetry, we can assume that $C$ contains at least as many source edges as sink ones.  It follows
that $C$ contains either $4$ or $7$ source edges.
If $C$ has $7$ source edges, then let $q$ be the layout for $G$ such that $q(f_1)=q(f_2)=-3$.
Note that $q$ is $\psi$-balanced, and consider the subgraph $K_0$ of $G$ obtained by Lemma~\ref{lemma-cut}.
As $|q(f_1)+q(f_2)|=6$ and neither $f_1$ nor $f_2$ are contained inside a separating $(\le\!6)$-cycle,
it follows that $K_0$ is a path joining two distinct vertices of $C$.  Let $P$ be a subpath of $C$ joining
the endpoints of $K_0$ such that the open disk $\Delta$ bounded by $P+K_0$ contains at most one of $f_1$ and $f_2$.
Let $n_s$ be the number of source edges in $P$ and $n_t$ the number of sink edges in $P$.
If $\Delta$ does not contain any $5$-face, then $|P|>|K_0|$ by 
Lemma~\ref{lemma-cut}.  Since all faces in $\Delta$ have length $4$, it follows that $|P|+|K_0|$ is even,
and thus $|P|\ge |K_0|+2$.  If $\Delta$ contains a $5$-face (say $f_1$), then since $C$ has $7$ source edges, we can by
symmetry between $f_1$ and $f_2$ assume that $n_s\ge 4$, and thus $|P|-3\ge n_s-n_t-3=|n_s-n_t+q(f_1)|>|K_0|$ by Lemma~\ref{lemma-cut}.
In both cases, we have $|P|\ge |K_0|+2$.  Let $R=C-P$ and note that $R+K_0$ is a cycle of length
at most $|C|-|P|+|K_0|\le |C|-2=6$.  However, by the choice of $P$, the open disk bounded by $R+K_0$ contains
a $5$-face, contrary to the assumptions of Theorem~\ref{thm:8cycleprecise}.

Therefore, $C$ has $4$ source edges, and $G$ has two $\psi$-balanced layouts $q_1$ and $q_2$
such that $q_i(f_i)=3$ and $q_i(f_{3-i})=-3$ for $i\in \{1,2\}$.
Let $K_1$ and $K_2$ be the subgraphs of $G$ obtained by Lemma~\ref{lemma-cut} applied to $q_1$ and $q_2$,
respectively.  As $|q_i(f_j)|=3$ and $q_i(f_1)+q_i(f_2)=0$ for $i,j\in \{1,2\}$, the case (b) of Lemma~\ref{lemma-cut}
cannot apply, and thus both $K_1$ and $K_2$ are paths.  Let $v_1$ and $w_1$ be the endpoints of $K_1$ and let $v_2$ and $w_2$ be the endpoints of $K_2$.

Suppose that there exists a path $P\subset C$ joining $v_i$ with $w_i$ for some $i\in\{1,2\}$
such that all faces drawn in the open disk bounded by $P+K_i$ have length $4$.  By Lemma~\ref{lemma-cut},
we have $|P|>|K_i|$, and since $P+K_i$ has even length, $|P|\ge |K_i|+2$.  We conclude that the cycle
$(C-P)+K_i$ has length at most $|C|-|P|+|K_i|\le 6$, and since the open disk bounded by $(C-P)+K_i$ contains
two $5$-faces, it contradicts the assumptions of Theorem~\ref{thm:8cycleprecise}.

% It seemed that $P_{1,2}$ was never used
Consequently, there is no such path.  For $i\in\{1,2\}$, let $P_{i}\subset C$ be the path
joining $v_i$ with $w_i$ such that the open disk $\Delta_{i}$ bounded by $P_{i}+K_i$ contains
$f_i$.  Let $n_i^s$ and $n_i^t$ denote the number of source and sink edges, respectively, of $P_{i}$, for $i\in\{1,2\}$.
By Lemma~\ref{lemma-cut}, we have $|n^s_i-n^t_i+3|>|K_i|$.  Since $n^t_i\le 4$ and $|K_i|\ge 1$, we have
$|n^s_i-n^t_i+3|=n^s_i-n^t_i+3$, and thus $n^s_i-n^t_i+2\ge |K_i|$.  As $\Delta_{i}$ contains one $5$ face
and all other faces in $\Delta_{i}$ have length $4$, the cycle $K_i+P_{i}$ has odd length, and thus
$|P_{i}|$ and $|K_i|$ have opposite parity.  Since $|P_i|$ and $n^s_i-n^t_i$ have the same
parity, we can improve the inequality to
\claim{cl-mainin}{$n^s_i-n^t_i+1\ge |K_i|$.}

\begin{figure}
\center{\includegraphics{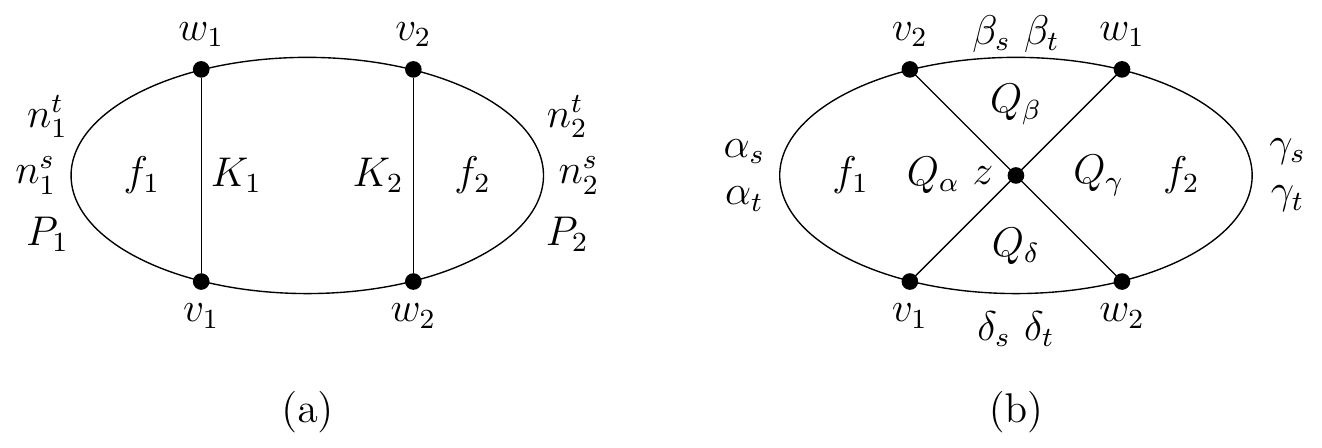}}
\caption{Possible cuts in $G$.}\label{fig-ff}
\end{figure}

Suppose first that we can choose the labels so that the order of the endpoints along $C$ is $v_1$, $w_1$, $v_2$, $w_2$
(this is always the case if the endpoints of $K_1$ and $K_2$ are not pairwise distinct).  Let $\psi'$ be the $3$-coloring
of $C$ defined by $\psi'(x)=4-\psi(x)$.  The source edges with respect to $\psi$ are sink edges with respect to $\psi'$ and vice
versa.  Consequently, replacing $\psi$ with $\psi'$ swaps the roles of the paths $K_1$ and $K_2$.  Therefore,
by making this replacement if necessary, we can assume that $P_{1}$ and $P_{2}$ are edge-disjoint,
as in Figure~\ref{fig-ff}(a).
For $i\in\{1,2\}$, the cycle $K_i+P_{i}$ has length at least five, i.e.,
\claim{cl-lenin}{$n^s_i+n^t_i+|K_i|\ge 5$.}
Summing this inequality with \refclaim{cl-mainin}, we obtain $n^s_i\ge 2$.  However, since $P_{1}$ and $P_{2}$
are edge-disjoint and $C$ has exactly $4$ source edges, we have $n^s_1+n^s_2\le 4$.  We conclude
that $n^s_i=2$ for $i\in\{1,2\}$ and that equality holds in \refclaim{cl-mainin} and \refclaim{cl-lenin}.
In particular, $K_i+P_{i}$ is a $5$-cycle, and since $G$ does not contain separating $5$-cycles,
it follows that $K_i+P_{i}=f_i$.  Therefore, $f_i\cap C=P_{i}$ is a path of length at least $n^s_i=2$,
and $G$ satisfies (c).

Finally, consider the case that the endpoints of $K_1$ and $K_2$ are pairwise distinct and their order along $C$ is $v_1$, $v_2$, $w_1$, $w_2$.
Let $\alpha$, $\beta$, $\gamma$ and $\delta$ be the subpaths of $C$ between $v_1$ and $v_2$, between $v_2$ and $w_1$, between $w_1$ and $w_2$ and between $w_2$ and $v_1$,
respectively, chosen so that the paths $\alpha$, $\beta$, $\gamma$ and $\delta$ are pairwise edge-disjoint.
By planarity, the paths $K_1$ and $K_2$ intersect.
Let $Q_\alpha$ denote the walk between $v_1$ and $v_2$ such that the concatenation of $\alpha$ with $Q_\alpha$ is the boundary walk
of the internal face of the graph $C+K_1+K_2$ incident with $\alpha$.  Let $\alpha_s$ and $\alpha_t$ denote the number of source and sink edges
of $\alpha$, respectively.  Define $Q_x$, $x_s$ and $x_t$ for $x\in\{\beta,\gamma,\delta\}$ analogously.  By symmetry, we can assume
that $f_1$ is contained in the open disk bounded by $\alpha+Q_\alpha$ and $f_2$ is contained in the closed disk bounded by $\gamma+Q_\gamma$,
as in Figure~\ref{fig-ff}(b).
Note that $P_{1}=\alpha+\beta$ and $P_{2}=\beta+\gamma$, and thus by \refclaim{cl-mainin}, we have
$\alpha_s+\beta_s-\alpha_t-\beta_t+1\ge |K_1|$ and $\beta_s+\gamma_s-\beta_t-\gamma_t+1\ge |K_2|$.
Furthermore, $\alpha+Q_\alpha$ and $\gamma+Q_\gamma$ have length at least $5$,
and $|Q_\alpha|+|Q_\gamma|\le |K_1|+|K_2|$, implying that $\alpha_s+\alpha_t+\gamma_s+\gamma_t+|K_1|+|K_2|\ge 10$.
Summing these inequalities, we obtain $\alpha_s+\beta_s+\gamma_s-\beta_t\ge 4$.  As $\alpha_s+\beta_s+\gamma_s+\delta_s=4$,
this implies that $-\beta_t\ge \delta_s$, and as $\beta_t$ and $\delta_s$ are nonnegative, we have $\beta_t=\delta_s=0$.
Furthermore, all the inequalities must hold with equality, and in particular $\alpha+Q_\alpha$ and $\gamma+Q_\gamma$ have length $5$.
As $G$ does not contain separating $5$-cycles, we have $f_1=\alpha+Q_\alpha$ and $f_2=\gamma+Q_\gamma$.  Also,
$|Q_\alpha|+|Q_\gamma|=|K_1|+|K_2|$, and thus every edge of $K_1\cup K_2$ is incident with $f_1$ or $f_2$.
We conclude that the boundaries of $f_1$ and $f_2$ intersect; let $z$ be an arbitrary common vertex of $f_1$ and $f_2$.

Since $\beta_t=0=\delta_s=0$, all edges of $\beta$ are source and all edges of $\delta$ are sink.  Let $c_1$ be the vertex of $\alpha$
at distance $\alpha_s$ from $v_2$, and $c_5$ the vertex of $\gamma$ at distance $\gamma_s$ from $w_1$.  
Note that the distance between $c_1$ and $c_5$ in $C$ is $\alpha_s+\beta_s+\gamma_s=4-\delta_s=4$, and thus we can label the vertices of $C$ as $c_1c_2\ldots c_8$ in order.
Let $W_1$ be the path between $c_1$ and $z$ in $\alpha\cup K_1$, let $Z_1$ be the path between $c_1$ and $z$ in $\alpha\cup K_2$,
let $W_2$ be the path between $c_5$ and $z$ in $\gamma\cup K_2$ and let $Z_2$ be the path between $c_5$ and $z$ in $\gamma\cup K_1$.
Note that $W_1+Z_1=f_1$ and $W_2+Z_2=f_2$, and thus $|W_1|+|Z_1|=5$ and $|W_2|+|Z_2|=5$.
As $\alpha_s+\beta_s-\alpha_t-\beta_t+1=|K_1|$ and $\alpha_s+\beta_s+\gamma_s=4$, we have $\alpha_t+|K_1|+\gamma_s=5-\beta_t=5$, i.e.,
$|W_1|+|Z_2|=5$.  Symmetrically, $|Z_1|+|W_2|=5$.  It follows that $|W_1|=|W_2|$ and $|Z_1|=|Z_2|$.
As the disk bounded by the closed walk $W_1+W_2+c_1c_2c_3c_4c_5$ contains $5$-faces $f_1$ and $f_2$,
we have $|W_1|+|W_2|+4>6$.  Consequently, $|W_1|>1$, and symmetrically $|Z_1|>1$.  By symmetry, we can assume that
$|W_1|=|W_2|=2$ and $|Z_1|=|Z_2|=3$, and we conclude that $G$ satisfies (d).

Suppose now that $G$ is a graph satisfying (a), (b), (c) or (d).  If $G$ satisfies (a), then it is $C$-critical by
Theorem~\ref{thm:only4}.  If $G$ satisfies (b), then it is $C$-critical by Theorem~\ref{thm:n-2}.
Let us consider the case that $G$ satisfies (c) or (d).  If $G$ satisfies (c), then let $\psi$ be a $3$-coloring of $C$
with exactly $4$ source edges, two of them incident with $f_1$ and two of them incident with $f_2$.
If $G$ satisfies (d), then let $\psi$ be a $3$-coloring of $C$ with exactly $4$ source edges $c_1c_2$,
$c_2c_3$, $c_3c_4$ and $c_4c_5$.  Note that $q_1$ and $q_2$ are the only $\psi$-balanced layouts for $G$.
Observe that both $G^{q_1,\psi}$ and $G^{q_2,\psi}$ contain an edge-cut of size at most $5$ separating the
terminals, while $c(q_1,\psi)=c(q_2,\psi)=7$.  By Lemma~\ref{lemma-flowcol}, $\psi$ does not extend to a $3$-coloring of $G$,
and thus $G$ has a $C$-critical subgraph $G_0$.  By Lemma~\ref{lemma-faces}, we have $S(G_0)\in\{\emptyset, \{6\}, \{5,5\}\}$.
As $G$ does not contain separating $(\le\!5)$-cycles and the interior of every 6-cycle of $G$ contains only $4$-faces,
we conclude that $G=G_0$, and thus $G$ is $C$-critical.
\end{proof}

\paragraph{Acknowledgement.}
We thank Alexandr Kostochka for encouraging us to write this paper.

\bibliographystyle{acm}
\bibliography{8cyc}
\end{document}